\pgfplotsset{compat=1.15}
\newcommand{\footremember}[2]{%
    \footnote{#2}
    \newcounter{#1}
    \setcounter{#1}{\value{footnote}}%
}
\newcommand{\footrecall}[1]{%
    \footnotemark[\value{#1}]%
} 
\def\R{{\mathbb R}}
\def\N{{\mathbb N}}
\DeclareMathOperator{\rank}{rank}
\DeclareMathOperator{\bit}{bit}
\DeclareMathOperator{\sing}{sing}
\DeclareMathOperator{\reg}{reg}
\DeclareMathOperator{\KKT}{KKT}
\DeclareMathOperator{\FJ}{FJ}
\newtheorem{theorem}{\bf Theorem}
\newtheorem{lemma}{\bf Lemma}
\newtheorem{algorithm}{\bf Algorithm}
\newtheorem{example}{\bf Example}
\newtheorem{corollary}{\bf Corollary}
\newtheorem{remark}{\bf Remark}
\providecommand{\keywords}[1]
{
  \textbf{\textbf{Keywords:}} #1
}
\begin{document}
\definecolor{qqzzff}{rgb}{0,0.6,1}
\definecolor{ududff}{rgb}{0.30196078431372547,0.30196078431372547,1}
\definecolor{xdxdff}{rgb}{0.49019607843137253,0.49019607843137253,1}
\definecolor{ffzzqq}{rgb}{1,0.6,0}
\definecolor{qqzzqq}{rgb}{0,0.6,0}
\definecolor{ffqqqq}{rgb}{1,0,0}
\definecolor{uuuuuu}{rgb}{0.26666666666666666,0.26666666666666666,0.26666666666666666}
\newcommand{\vi}[1]{\textcolor{blue}{#1}}
\newif\ifcomment
\commentfalse
\commenttrue
\newcommand{\comment}[3]{%
\ifcomment%
	{\color{#1}\bfseries\sffamily#3%
	}%
	\marginpar{\textcolor{#1}{\hspace{3em}\bfseries\sffamily #2}}%
	\else%
	\fi%
}

\newcommand{\mapr}[1]{{{\color{blue}#1}}}
\newcommand{\revise}[1]{{{\color{blue}#1}}}
\newcommand{\victor}[1]{{{\color{red}#1}}}

\title{Sums of squares representations on singular loci}

\author{%
Ngoc Hoang Anh Mai\footremember{1}{University of Konstanz; D-78464 Konstanz, Germany.}\qquad
Victor Magron\footrecall{1} \footremember{2}{Universit\'e de Toulouse; LAAS; F-31400 Toulouse, France.}
  }

\maketitle

\begin{abstract}
The problem of characterizing a real polynomial $f$ as a sum of squares of polynomials on a real algebraic variety $V$ dates back to the pioneering work of Hilbert in  \cite{hilbert1888darstellung}.
In this paper, we investigate this problem with a focus on cases where the real zeros of $f$ on $V$ are singular points of $V$. 
By using optimality conditions and irreducible decomposition, we provide a positive answer to the following essential question of polynomial optimization: \emph{Are there always exact semidefinite programs to compute the minimum value attained by a given polynomial over a given real algebraic variety?} 
Our answer implies that Lasserre's hierarchy, which is known as a bridge between convex and non-convex programs with algebraic structures, has finite convergence not only in the generic case but also in the general case. 
As a result, we constructively prove that \emph{each hyperbolic program is equivalent to a semidefinite program}.
\end{abstract}
\keywords{sum of squares; Nichtnegativstellensatz; gradient ideal; singular locus;  polynomial optimization; Karush--Kuhn--Tucker conditions; semidefinite programming}
\tableofcontents
\section{Introduction}
\paragraph{Semidefinite programs, Positivstellens\"atze, and Polynomial optimization.}
Semidefinite programming, a subfield of convex optimization, was developed in the early 1960s by Bellman and Fan \cite{bellman1963systems}. Its goal is to minimize a linear objective function over the intersection of the cone of positive semidefinite matrices with an affine space. One of the significant applications of semidefinite programming is to relax a class of non-convex programs with algebraic structures, which are known as polynomial optimization problems. These problems have objective and constraint functions that are all polynomials.

In 2001, Lasserre introduced an appropriate hierarchy of semidefinite programs in \cite{lasserre2001global} that returns a sequence of values approximately converging to the optimal value of a given polynomial optimization problem. To do this, he utilized Positivstellens\"atze, which are representations of polynomials positive on a basic semi-algebraic set, a set defined by a system of finitely many polynomial inequalities. For instance, Putinar's Positivstellensatz \cite{putinar1993positive}, which guarantees the convergence of Lasserre's hierarchy, says that each polynomial positive on a compact basic semi-algebraic set $S$, satisfying the so-called Archimedean condition, can be decomposed as a linear combination of polynomials defining $S$, with weights that are sums of squares of polynomials.
\paragraph{Exact semidefinite programs and Nichtnegativstellens\"atze.}
We aim to develop exact semidefinite programs for determining the optimal value of a given polynomial optimization problem. 
In this context, the term ``exact semidefinite program'' means that, given the objective and constraint polynomials of a polynomial optimization problem, we can algorithmically construct a semidefinite program whose optimal value is precisely equal to that of the original problem. 
Obtaining such programs is highly significant since it enables us to use convex optimization to solve non-convex problems. 
Moreover, it is typically challenging to bound the gap between the convex relaxation and the original problem, as is the case with Goemans-Williamson's algorithm for Max-Cut \cite{goemans1995improved}.

To provide such exact semidefinite programs, we need to build Nichtnegativstellens\"atze that are the representations of polynomials non-negative on a basic semi-algebraic set.
Marshall showed in \cite{marshall2006representations,marshall2009representations} 
a Nichtnegativstellensatz having the same form as Putinar's under the so-called \emph{boundary Hessian conditions}.
Based on this, Nie proves in \cite{nie2014optimality} that Lasserre's hierarchy has finite convergence under generic assumptions related to second-order optimality conditions.
The works of Marshall and Nie rely on the local-global principle stated by Scheiderer in \cite{scheiderer2000sums,scheiderer2003sums,scheiderer2006sums}.
There Scheiderer proved the non-strict extension of Schm\"udgen's Positivstellensatz \cite{schmudgen1991thek}.
The former says that every polynomial $f$ positive on a compact basic semi-algebraic set $S$ is a linear combination of products of polynomials defining $S$ with weights that are sums of squares of polynomials.
In his proof, Scheiderer needs a finiteness assumption on the real zeros of $f$ on $S$.
We refer the readers to \cite{burgdorf2012pure} for some extensions by Burgdorf, Scheiderer, and Schweighofer that allow us to remove the finiteness assumption on the real zeros of $f$ on $S$ but still maintain the compactness of $S$.
Under strong regularity assumption, Nie obtains in his work\cite{nie2013exact} exact semidefinite programs when using the Jacobian of the input polynomials.

\paragraph{Nichtnegativstellens\"atze based on optimality conditions.}
We consider a semi-algebraic set $S$ as a real manifold. 
A point $a$ in $S$ is called a singular (resp. regular) point of $S$ if the tangent space of $S$ at $a$ has smaller (resp. the same) dimension than $S$. 
The singular (resp. regular) locus of $S$ is the set of all singular (resp. regular) points of $S$.

Given a polynomial $f$ non-negative on a basic semi-algebraic set $S$, Demmel, Nie, and Powers provide in \cite{demmel2007representations} a way to obtain a Nichtnegativstellensatz on the intersection $S\cap V^{\KKT}$.
Here $V^{\KKT}$ represents for the variety defined by the Karush--Kuhn--Tucker conditions for the minimization of $f$ on $S$.
Using the Fritz-John conditions, the first author extends this approach in \cite{mai2022exact} to the case where the image of the singular locus $S^{\sing}$ of $S$ under $f$, denoted by $f(S^{\sing})$, is finite.
He also indicates in \cite{mai2022exact} several examples where the previous Nichtnegativstellens\"atze are inapplicable since the real zeros of $f$ on $S$ are in $S^{\sing}$.
In \cite{mai2022complexity2} the first author provides the degree bounds for these Nichtnegativstellens\"atze and analyzes the convergence rate for their application to polynomial optimization.
We emphasize that the Karush--Kuhn--Tucker conditions and the finiteness assumption on $f(S^{\sing})$ belong to the \emph{generic} case.
By generic, we mean that the properties hold in a Zariski open set in the space of the coefficients of the input polynomials with given degrees.
The remaining case, corresponding to a set $f(S^{\sing})$ of  infinite cardinality, has not been tackled so far.

Regarding the above two types of first-order optimality conditions, the Karush--Kuhn--Tucker conditions and the Fritz-John conditions play roles in characterizing the real zeros of polynomial $f$ on a semi-algebraic set $S$.
Moreover, it is not hard to constructively prove the sums of squares-based representations of $f$ on the intersection of $S$ with the varieties defined by these optimality conditions.
To do so, we use the fact that $f$ is constant on each connected component of $V^{\KKT}$, the variety defined by the Karush--Kuhn--Tucker conditions associated to the minimization of $f$ on $S$.
This property also holds for $V^{\FJ}$, the variety defined by the Fritz-John conditions, under the finiteness assumption on $f(S^{\sing})$. 
The difference here is that $V^{\KKT}$ contains the real zeros of $f$ that are regular points of $S$ while $V^{\FJ}$ includes all real zeros of $f$ on $S$ even if they are singular points of $S$.

\paragraph{Contribution.} In the paper, we aim to provide some representations (with degree bounds) of a polynomial $f$ non-negative on a basic semi-algebraic set $S$ in the general case. 

On one hand, we convert $S$ to a real algebraic variety $V$ (the common real zeros of a system of polynomials) in higher-dimensional real space and consider the representations of $f$ on $V$.
Set $U:=V$.
We then decompose $U$ into finitely many irreducible components $U_j$.
If $U_j$ has zero dimension, then $f$ is identical to a sum of squares of polynomials on $U_j$.
Otherwise, $f$ is identical to a sum of squares of polynomials on $U_j^{\KKT}$, the subvariety of $U_j$ defined by the Karush--Kuhn--Tucker conditions associated to the minimization of $f$ on $U_j$.
In this case, we update $U:=U_j^{\sing}$ with $U_j^{\sing}$ being the singular locus of $U_j$ and repeat the above process.
It will terminate after a finite number of steps because the singular locus $U_j^{\sing}$ has lower dimension than $U_j$. 

On the other hand, we provide the degree bounds of our sums of squares-based representations for $f$ on $U_j^{\KKT}$ (or zero-dimensional $U_j$).
Consequently, we analyze the convergence rate of Lasserre's hierarchy applied to minimizing a polynomial $p$ on each $U_j^{\KKT}$ (or zero-dimensional $U_j$). 
We prove that Lasserre's hierarchy on some  $U_j^{\KKT}$ (or zero-dimensional $U_j$) has finite convergence to the minimum value $p^\star$ attained by $p$ on $V$.
Based on this and the algebraic structure of hyperbolic cones, we demonstrate that each hyperbolic program can be written as a semidefinite program.

\paragraph{Hyperbolic polynomials and hyperbolic programming.} For interested readers, hyperbolic programming, introduced by G\"uler in \cite{guler1997hyperbolic}, is concerned with optimizing a linear objective function over the intersection of an affine space with the so-called hyperbolic cone constrained by a hyperbolic polynomial.
Here a real polynomial $p$ is hyperbolic with respect to a given vector $e$ if the univariate polynomial $t\mapsto p(te-a)$ has only real roots for all vectors $a$.
We refer the readers to  \cite{helton2007linear,
netzer2012polynomials,
netzer2013determinantal,
kummer2019spectrahedral,
kummer2017determinantal,
kummer2015hyperbolic,
schweighofer2019spectrahedral,
naldi2018symbolic
} 
for some recent works on hyperbolic polynomials and hyperbolic programming.
Their studies have focused on attacking the generalized Lax conjecture, whose direct consequence is that every hyperbolic program is equivalent to a semidefinite program (see, e.g., \cite[Remark 1.7]{amini2019spectrahedrality}).
 In our paper, we prove the latter. Readers might wonder if the equivalence of the two programs indicates the coincidence of their feasible sets, leading to a solution for the generalized Lax conjecture. 
From our point of view, the generalized Lax conjecture is about fixed cones used in different optimization problems. It is much different from saying that a hyperbolic optimization problem with a given objective function $f$ can be solved by a specific semidefinite program depending on the hyperbolic program associated with $f$.
\paragraph{Previous works.} 
In \cite{mai2022nichtnegativstellensatz}, the first author provides a sum of squares-based representation of a polynomial $f$ nonnegative on a real algebraic variety $V$ in the case where the regular locus $V^{\reg}$ of $V$ is dense in $V$.
The key idea is to consider $V$ as the image of a regular variety $W$ under a morphism $\varphi$, defined by a vector of polynomials.
This is done thanks to Hironaka's resolution of singularities \cite{hironaka1964resolution,hironaka1964resolution2}.
Then we use Demmel--Nie--Powers' Nichtnegativstellensatz to get the representation of $f\circ \varphi$ on $W_{\KKT}$, the variety defined by the Karush--Kuhn--Tucker conditions for minimizing $f\circ \varphi$ on $W$.
However, this method is not always applicable to cases where $V^{\reg}$ is not dense in $V$ (e.g., $V$ is the Whitney or Cartan umbrella).
Compared to this, there is no matter with our method in such challenging cases.

Bucero and Mourrain present in \cite{bucero2013exact} a technique  to obtain  exact semidefinite programs for minimizing a polynomial $p$ over a semi-algebraic set $S$ in the generic case. 
Firstly, they consider the equivalent minimization of $p$ on a new semi-algebraic set $S_{\FJ}$, the intersection of $S$ with the projection of the variety defined by the Fritz-John conditions for minimizing $p$ on $S$.
Secondly, they decompose $S_{\FJ}$ into two semi-algebraic sets $S_{\KKT}$ and $S_{\sing}$, then consider the minimization of $p$ on each of these two sets.
Here $S_{\KKT}$ is the intersection of $S$ with the projection of the variety defined by the Karush--Kuhn--Tucker conditions for minimizing $p$ on $S$.
The set $S_{\sing}$ contains the singular locus of $S$.
They obtain exact semidefinite relaxations for  minimizing $p$ on $S_{\KKT}$ thanks to the work of Demmel, Nie, and Powers \cite{demmel2007representations}.
The minimization of $p$ on $S_{\sing}$ is solved recursively.
If $S_{\sing}$ is zero-dimensional, they obtain exact semidefinite programs for  minimizing $p$ on the finite set $S_{\sing}$.
The case of positive-dimensional $S_{\sing}$ has not been handled entirely in their paper.

Regarding the exactness property, we emphasize that our representations in this paper are sum-of-squares exactness.
In his work \cite{baldi2020exact} with Baldi, the third author considers moment exactness.
It is stronger than sum-of-squares exactness in practice since it yields the minimizers and tests exactness for polynomial optimization. 
Moreover, it is shown in \cite[Theorem 4.14]{baldi2020exact} that if the polar variety defined by the product of minors (and related to the projection of the Karush--Kuhn--Tucker variety) is finite, then moment exactness also holds.

\paragraph{Motivation.}
Our method described in this paper has potential applications to mathematical programs with complementarity constraints (see, e.g., \cite{albrecht2017mathematical,mai2022symbolic2}). 
These optimization problems are challenging because the complementarity constraints typically violate all standard constraint qualifications, making it difficult to find a solution. Constraint qualifications are known to be sufficient conditions for the Karush--Kuhn--Tucker conditions in nonlinear programming. 
Therefore, classical methods that rely on the Karush--Kuhn--Tucker conditions to solve mathematical programs with complementarity constraints are limited. 
In contrast, our method does not require any constraint qualifications

\paragraph{Organization.}
We organize the paper as follows: 
Section \ref{sec:Preliminaries} presents some preliminaries from real algebraic geometry needed to prove our main results.
Section \ref{sec:decomp.sing} is to provide an algorithm that allows us to obtain zero-dimensional and positive-dimensional subvarieties containing singular points of a real algebraic variety.
Section \ref{sec:represent.sing} is to state some
Nichtnegativstellens\"atze on these subvarieties. 
Section \ref{sec:higher-order.opt} is to build higher-order optimality conditions that characterize global minimizers for a polynomial optimization problem.
Section \ref{sec:POP.general} is to present the main algorithm that enables us to obtain exact semidefinite programs for polynomial optimization problems with global minimizers.
Section \ref{sec:hyperbolic} shows how to convert a hyperbolic program into a semidefinite program.

We give some interesting examples to illustrate our results.
We perform some calculations on these examples in Julia 1.7.1 with the software Oscar \cite{OSCAR}. 
The codes for them are available in the link:  \url{https://github.com/maihoanganh/SingularSOS}.
\section{Preliminaries}
\label{sec:Preliminaries}
\subsection{Real algebraic varieties}
Let $\R[x]$ denote the ring of polynomials with real coefficients in the vector of variables $x=(x_1,\dots,x_n)$.
Let $\R_r[x]$ denote the linear space of polynomials in $\R[x]$ of degree at most $r$.

Given $h_1,\dots,h_l$ in $\R[x]$, we denote by $V(h)$ the (real) algebraic variety in $\R^n$ defined by the vector $h=(h_1,\dots,h_l)$, i.e.,
\begin{equation}
V(h):=\{x\in \R^n\,:\,h_j(x)=0\,,\,j=1,\dots,l\}\,.
\end{equation}
In this case, $h_1,\dots,h_l$ are called the polynomials defining $V(h)$. 

Given $h_1,\dots,h_l\in\R[x]$, let $I(h)[x]$ be the ideal generated by $h=(h_1,\dots,h_l)$, i.e.,
\begin{equation}
    I(h)[x]:= \sum_{j=1}^l h_j \R[x]\,.
\end{equation}
The real radical of an ideal $I(h)[x]$, denoted by $\sqrt[\R]{I(h)[x]}$, is defined as
\begin{equation}
{\sqrt[\R]{I(h)[x]}}:=\{f\in\R[x]\,:\,\exists m\in \N\,:\,-f^{2m}\in\Sigma^2[x]+I(h)[x]\}\,.
\end{equation}
Krivine--Stengle's Nichtnegativstellensatz \cite{krivine1964anneaux} imply that
\begin{equation}\label{eq:real.radi}
\sqrt[\R]{I(h)[x]}:=\{p\in\R[x]\,:\,p=0\text{ on }V(h)\}\,.
\end{equation}
We say that $I(h)[x]$ is real radical if $I(h)[x]=\sqrt[\R]{I(h)[x]}$.

Given an algebraic variety $V$ in $\R^n$, we denote by $I(V)$ the vanishing ideal of $V$, i.e.,
\begin{equation}
I(V):=\{p\in\R[x]\,:\,p(x)=0\,,\,\forall x\in V\}\,.
\end{equation}
Note that $I(V)$ is a real radical ideal.
If $V$ is defined by $h=(h_1,\dots,h_l)$ with $h_j\in\R[x]$, then  $I(V)=\sqrt[\R]{I(h)[x]}$.
To compute the generators of $I(V)$ with given polynomials $h_1,\dots,h_l$ defining $V$, we can use, e.g., Becker--Neuhaus' method in \cite{becker1993computation,neuhaus1998computation}. 
Thereby the degrees of the generators of $I(V)$ are bounded from above by $d^{2^{\mathcal O(n^2)}}$ if the degrees of $h_j$s are at most $d$.
In addition, computing the generators of $I(V)$ can be done via the kernel of the moment matrices (or annihilator of moment sequences) as in, e.g., Baldi--Mourrain's method \cite{baldi2021computing}.
Then it could be interesting to get bounds on the degree of the generators returned by this method.

Given an algebraic variety $V$ in $\R^n$, we say that $V$ is irreducible if there do not exist two proper subvarieties $V_1$, $V_2$ in $\R^n$ such that $V = V_1 \cup V_2$.

We recall the following result stated in \cite[Section 4.6, Theorem 4]{cox2013ideals} and \cite[Lemma 4.9]{sheffer2022polynomial} about the structure of algebraic varieties:
\begin{lemma}\label{lem:decomp.irr}
Let $V$ be an algebraic variety in $\R^n$.
Then $V$ can be written as a finite union
\begin{equation}
V = V_1 \cup \dots \cup V_r\,,
\end{equation}
where each $V_j$ is an irreducible algebraic variety in $\R^n$ such that $V_j\not\subset V_t$ if $j\ne t$.
Moreover, if $V$ is defined by polynomials in $\R_d[x]$, then $r$ is bounded by a constant depending solely on $n$ and $d$.
\end{lemma}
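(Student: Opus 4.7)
The plan is to prove the decomposition by Noetherian induction, enforce the non-redundancy condition by discarding contained components, and obtain the quantitative bound by passing to the complexification $V^{\C} \subset \C^n$.

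First, for the existence of a finite irreducible decomposition, I would argue by Noetherian induction on $V$. If $V$ is itself irreducible, take $r = 1$. Otherwise, write $V = V' \cup V''$ with $V', V'' \subsetneq V$ proper subvarieties, and apply the argument recursively to $V'$ and $V''$. Termination is guaranteed because an infinite strictly decreasing chain of subvarieties would, via the order-reversing map $W \mapsto I(W)$ from algebraic subvarieties of $\R^n$ to ideals of $\R[x]$, produce an infinite strictly ascending chain of ideals in $\R[x]$, contradicting the Hilbert basis theorem. Concatenating the finite decompositions of $V'$ and $V''$ yields $V = V_1 \cup \dots \cup V_{r'}$ with each $V_j$ irreducible; to enforce the non-redundancy condition $V_j \not\subset V_t$ for $j \neq t$, I would then iteratively discard any $V_j$ contained in some other $V_t$. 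The union is preserved at each discard step, and the resulting shortened list retains irreducibility of its members.

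For the quantitative bound on $r$, I would consider the complex variety $V^{\C} \subset \C^n$ cut out by the same real generators of $V$, all of degree at most $d$. By a classical Bezout-type argument (e.g., Heintz's effective bounds), the number $s$ of complex-irreducible components of $V^{\C}$ is bounded by a constant $C_1(n,d)$ depending only on $n$ and $d$. To each real-irreducible component $V_j$ of $V$ I would associate its complexification $V_j^{\C} := V(I(V_j)) \subset \C^n$, which is a union of some subfamily of the complex-irreducible components of $V^{\C}$. Since $I(V_j)$ is real-radical, the real Nullstellensatz applied to $V_j^{\C}$ yields $V_j^{\C} \cap \R^n = V_j$, so distinct $V_j$'s produce distinct $V_j^{\C}$'s. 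Consequently the map $V_j \mapsto V_j^{\C}$ is an injection into the power set of complex-irreducible components of $V^{\C}$, and therefore $r \leq 2^{C_1(n,d)}$, a bound depending solely on $n$ and $d$.

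The main obstacle is the discrepancy between real and complex irreducibility: a real-irreducible variety need not be complex-irreducible (for instance, $V(x_1^2 + x_2^2) = \{0\} \subset \R^2$ is real-irreducible but splits into two conjugate complex lines over $\C$), so a direct bijection between real and complex components is unavailable. The injectivity argument above, based on the fact that each $V_j$ is recovered from $V_j^{\C}$ by intersection with $\R^n$, circumvents this difficulty and suffices to deduce the desired finite bound.
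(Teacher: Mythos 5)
Your Noetherian-induction argument for the existence of a finite irredundant decomposition is correct and is what the paper implicitly invokes by citing Cox--Little--O'Shea. The gap is in the quantitative part. The claim that $V_j^{\C} := V_{\C}(I(V_j))$ is a union of complex-irreducible components of $V^{\C} := V_{\C}(h)$ is false, and the example $h = x_1^2 + x_2^2$ that you yourself mention already witnesses this: there $V = \{0\}$ and $V_1 = \{0\}$, so $V_1^{\C} = V_{\C}(x_1,x_2) = \{0\} \subset \C^2$, which is not a union of any subfamily of the two complex lines composing $V^{\C}$. More generally, whenever $(h)$ is not real-radical, $V^{\C}$ contains points lying over no real point, and a real-irreducible component of $V$ can complexify to a strict subvariety of an irreducible component of $V^{\C}$. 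So although the map $V_j \mapsto V_j^{\C}$ is injective, you have not exhibited a codomain of cardinality bounded in terms of $n$ and $d$, and the stated bound $r \le 2^{C_1(n,d)}$ does not follow from the given argument.

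The fix is to complexify the real-radical ideal rather than the defining equations: set $W := V_{\C}(I(V))$. Since $I(V) = \bigcap_j I(V_j)$, one has $W = \bigcup_j V_j^{\C}$. Moreover each $V_j^{\C}$ is complex-irreducible, not merely a union of components: it is the Zariski closure of $V_j$ in $\C^n$ and is conjugation-stable, so any splitting $V_j^{\C} = Y \cup \bar Y$ with $Y \ne \bar Y$ would force the real points $V_j$ into the proper subvariety $Y \cap \bar Y$, contradicting density. Also $V_j^{\C} \not\subset V_t^{\C}$ for $j \ne t$, since intersecting with $\R^n$ would give $V_j \subset V_t$. Hence $V_1^{\C},\dots,V_r^{\C}$ is precisely the irreducible decomposition of $W$, and $r$ equals the number of its components. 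To bound this you now need a degree bound on the generators of $I(V)$ in terms of $n$ and $d$ --- for instance the Becker--Neuhaus bound $d^{2^{\mathcal O(n^2)}}$ that the paper records --- followed by a Bezout/Heintz bound on the number of components of $W$. Once corrected, the injection lands among individual components of $W$ rather than in a power set, so the extra exponential $2^{C_1}$ in your bound is also superfluous.
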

We call $V_1,\dots,V_r$ in Lemma \ref{lem:decomp.irr} irreducible components of $V$.
There are several algorithms for finding the polynomials defining $V_j$, $j=1,\dots,r$,  with given polynomials defining $V$. 
For instance, Gianni, Trager, and Zacharias suggest an irreducible decomposition (also called primary decomposition) in \cite{gianni1988grobner}.
\subsection{Regular and singular loci}
Given $h=(h_1,\dots,h_l)$ with $h_j\in\R[x]$, we denote by $J(h)$ the Jacobian matrix associated with $h$, i.e.,
\begin{equation}
J(h)(x):=\left(\frac{\partial h_j}{\partial x_t}(x)\right)_{1\le t\le n,1\le j\le l}\,.
\end{equation}
We define the dimension of an algebraic variety $V$ in $\R^n$, denoted by $\dim(V)$, to be the highest dimension at points at which $V$ is a real submanifold.
For convenience we assume $\dim(\emptyset)=0$ in this paper.

Let $V$ be a algebraic variety in $\R^n$ of dimension $d$.
Let $h_1,\dots,h_l\in\R[x]$ be the generators of $I(V)$.
Set $h=(h_1,\dots,h_l)$.
We say that $a\in V$ is a regular point of $V$ if the Jacobian matrix $J(h)(a)$ has rank $n-d$.
Here the rank of a matrix $M$ with real coefficients is the largest integer $r$ such that all $(r+1)\times(r+1)$ minors of $M$ vanish.
The set $V^{\reg}$ of all regular point of $V$ is called the regular locus of $V$.
Let $V^{\sing}=V\backslash V^{\reg}$.
We say that $a\in V$ is a singular point of $V$ if $a\in V^{\sing}$, i.e., the Jacobian matrix $J(h)(a)$ has rank smaller than $n-d$.
The set $V^{\sing}$ is called the singular locus of $V$.
The tangent space of $V$ at $a\in V$, denoted by
$T_a(V)$, is the linear subspace of $\R^n$ given by
\begin{equation}\label{eq:tangent}
T_a(V):=\{u\in\R^n\,:\,J(h)(a)^\top u=0\}\,.
\end{equation}
It is not hard to prove that $a\in V$ is a regular point of $V$ iff $\dim(T_a(V))=d=\dim(V)$.

Given $h=(h_1,\dots,h_l)$ with $h_j\in\R[x]$, denote by $m_t(h)(x)$ the vector of $t\times t$ minors of the Jacobian matrix $J(h)(x)$.
Then $m_t(h)$ has length $\binom{n}{t}\times \binom{l}{t}$.
Each entry of $m_t(h)$ is in $\R[x]$ and has degree at most $t\times \max_j\deg(h_j)$.

The following lemma states some basic properties of singular loci:
\begin{lemma}\label{lem:property.sing}
Let $V$ be an algebraic variety in $\R^n$ of dimension $d$.
Let $h_1,\dots,h_l\in\R[x]$ be the generators of $I(V)$.
Set $h=(h_1,\dots,h_l)$.
Then the following conditions hold:
\begin{enumerate}
\item The singular locus of $V$ is an algebraic variety in $\R^n$ defined by $(h,m_{n-d}(h))$.
\item The singular locus of $V$ has lower dimension than $V$.
\end{enumerate}
\end{lemma}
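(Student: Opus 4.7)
The plan is to unwind the rank condition for part (1), and to combine irreducible decomposition with the classical dimension bound on singular loci of irreducible varieties for part (2). For (1), I would directly unpack the definitions: a point $a \in V$ lies in $V^{\sing}$ exactly when $\rank(J(h)(a)) < n-d$, which by the paper's convention on matrix rank is equivalent to the simultaneous vanishing of every $(n-d)\times(n-d)$ minor of $J(h)(a)$, i.e., to $m_{n-d}(h)(a) = 0$. Combined with the membership condition $h(a)=0$, this identifies $V^{\sing}$ set-theoretically with $V(h, m_{n-d}(h))$. Since each entry of $m_{n-d}(h)$ is a polynomial in $\R[x]$, this exhibits $V^{\sing}$ as an algebraic variety defined by the vector $(h, m_{n-d}(h))$.

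For (2), I would invoke Lemma~\ref{lem:decomp.irr} to write $V = V_1 \cup \cdots \cup V_r$ with each $V_j$ irreducible, and set $d_j = \dim V_j$, so that $d = \max_j d_j$. The main step is to establish the inclusion
\begin{equation*}
V^{\sing} \;\subseteq\; \bigcup_{d_j < d} V_j \;\cup\; \bigcup_{i \ne j} (V_i \cap V_j) \;\cup\; \bigcup_{d_j = d} V_j^{\sing},
\end{equation*}
where the last $V_j^{\sing}$ refers to the singular locus of the irreducible variety $V_j$ considered on its own. I would argue by contrapositive: if $a \in V$ belongs to a unique top-dimensional component $V_j$ and is regular in $V_j$, then locally around $a$ the variety $V$ coincides with $V_j$, so the tangent space $T_a V$ has dimension $d$, and a direct Jacobian computation with generators of $I(V) = \bigcap_i I(V_i)$ yields $\rank(J(h)(a)) = n-d$, placing $a$ in $V^{\reg}$. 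Each of the three pieces in the displayed inclusion has dimension strictly less than $d$: the first by assumption; the second because $V_i \cap V_j$ is a proper subvariety of the irreducible $V_i$ whenever $i \ne j$ (using $V_j \not\subset V_i$), so $\dim(V_i \cap V_j) < \dim V_i \le d$; and the third by the classical theorem that the singular locus of an irreducible algebraic variety is a proper subvariety. Taking the maximum over these finitely many pieces gives $\dim V^{\sing} < \dim V$.

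The main obstacle is the inclusion above. The paper's definition of singular point is global in that it uses $d = \dim V$, whereas intrinsic smoothness on each component uses $d_j$; carefully comparing $J(h)$ built from generators of $I(V) = \bigcap_i I(V_i)$ with Jacobians of generators of the individual $I(V_j)$ at points that lie on several components or on a lower-dimensional component takes some care. Once this local analysis is settled, the dimension count itself is routine and yields both claims.
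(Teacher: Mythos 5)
Your proposal is essentially correct and, since the paper's own ``proof'' is nothing more than a pair of citations (to Smith et al.\ for part~(1) and to Sheffer's Theorem~4.8 for part~(2)), your sketch is actually more informative than what the paper offers and almost certainly follows the same route those references take. Part~(1) is exactly the translation of the rank-via-minors condition for $\rank J(h)(a) < n-d$, which is what you do. For part~(2), the reduction via irreducible decomposition is the standard one and the displayed inclusion is the right architecture, with three remarks worth tightening. First, to get $V_i \cap V_j \subsetneq V_i$ you should invoke $V_i \not\subset V_j$, not $V_j \not\subset V_i$; Lemma~\ref{lem:decomp.irr} gives both, so this is only a slip of indices. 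Second, the hypothesis of your contrapositive should be that $a$ lies on \emph{exactly one} component, not merely on a unique top-dimensional one; otherwise $a$ could also sit on a lower-dimensional $V_i$ and the claim that $V$ coincides with $V_j$ near $a$ fails. Your displayed inclusion does enforce the stronger hypothesis (the middle piece excludes all pairwise intersections), so this is a matter of wording rather than substance. Third, the ``direct Jacobian computation'' in your contrapositive is precisely where the real algebraic geometry enters and is the content that the paper delegates to Sheffer: one needs that $I(V)$ and $I(V_j)$ agree after localizing at $a$ (because $a$ misses the other components), and then the Jacobian criterion for the real-radical prime ideal $I(V_j)$, so this step should not be called routine. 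The care you flag at the end about points on several or lower-dimensional components is legitimate, but note that your own decomposition shunts exactly those points into the first two pieces of the union, so the delicate local analysis is only ever performed at smooth points of a single top-dimensional component.
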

\begin{proof}
The first statement is proved similarly to \cite[Section 6.2]{smith2004invitation}.
The proof of the second one can be found in \cite[Theorem 4.8]{sheffer2022polynomial} (see also \cite[Section 12.4.2]{sitharam2018handbook}).
\end{proof}

\subsection{First-order optimality conditions}
Given $h_0,h_1,\dots,h_l$ in $\R[x]$, consider the following polynomial optimization problem:
\begin{equation}\label{eq:pop}
    h^\star:=\inf\limits_{x\in V(h)} h_0(x)\,,
\end{equation}
where $V(h)$ is the algebraic variety in $\R^n$ defined by $h=(h_1,\dots,h_l)$.
\begin{remark}\label{rem:convert}
The more general form 
\begin{equation}
\begin{array}{rl}
\inf\limits_{y\in\R^r}&f(y)\\
\text{s.t.}&g_j(y)\ge 0\,,\,j=1,\dots,l\,,
\end{array}
\end{equation}
can be written as an instance of \eqref{eq:pop} by setting $x=(y,z)$, $h_0(x)=f(y)$, $h_j(x)=g_j(y)-z_j^2$.
\end{remark}
Given $p\in\R[x]$, we denote by $\nabla p$ the gradient of $p$, i.e., $\nabla p=(\frac{\partial p}{\partial x_1},\dots,\frac{\partial p}{\partial x_n})$.
We recall the Karush--Kuhn--Tucker conditions in the following lemma:
\begin{lemma}\label{lem:KKT}
Let $h_0$ in $\R[x]$. 
Let $V$ be an algebraic variety in $\R^n$ of dimension $d>0$. 
Let $h_1,\dots,h_l\in\R[x]$ be the generators of $I(V)$.
Let $x^\star$ be a local minimizer for problem \eqref{eq:pop} with $h=(h_1,\dots,h_l)$.
Assume that $J(h)(x^\star)$ has rank $n-d$.
Then the Karush--Kuhn--Tucker conditions hold for problem \eqref{eq:pop} at $x^\star$, i.e.,
\begin{equation}\label{eq:KKT}
    \begin{cases}
    		\exists (\lambda_1^\star,\dots,\lambda_l^\star)\in \R^{l}\,:\\
    		h_j(x^\star)=0\,,\,j=1,\dots,l\,,\\
          \nabla h_0(x^\star)=\sum_{j=1}^l \lambda_j^\star \nabla h_j(x^\star)\,.
    \end{cases}
\end{equation}
\end{lemma}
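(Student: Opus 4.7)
The plan is to establish the classical Lagrange-multiplier conclusion from the rank hypothesis, which is the usual first-order constraint qualification in this setting. The key observation is that the hypothesis $\rank J(h)(x^\star) = n-d$ says exactly that $x^\star$ is a regular point of $V$, and hence $V$ is locally a smooth $d$-dimensional submanifold of $\R^n$ in a neighborhood of $x^\star$, with tangent space $T_{x^\star}(V) = \ker J(h)(x^\star)^\top$ as given by \eqref{eq:tangent}.

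First I would invoke the implicit function theorem: because $J(h)(x^\star)$ has full column rank $n-d$, one can choose a subset of $n-d$ rows (i.e., a subset of $n-d$ of the $h_j$'s whose gradients at $x^\star$ are linearly independent) whose associated $(n-d)\times(n-d)$ Jacobian minor is invertible at $x^\star$. The zero set of these selected polynomials is then a smooth $d$-dimensional manifold in a neighborhood $U$ of $x^\star$, and since $h_1,\dots,h_l$ are all in $I(V)$ and hence vanish on $V$, the variety $V\cap U$ coincides with this smooth manifold near $x^\star$.

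Next I would translate the local minimality of $x^\star$ into a first-order condition on the tangent space. For any $u\in T_{x^\star}(V)$, the local smooth-manifold structure lets me construct a $C^1$ curve $\gamma:(-\varepsilon,\varepsilon)\to V$ with $\gamma(0)=x^\star$ and $\gamma'(0)=u$. The function $t\mapsto h_0(\gamma(t))$ has a local minimum at $t=0$, so
\begin{equation}
0=\frac{d}{dt}h_0(\gamma(t))\Big|_{t=0}=\spro{\nabla h_0(x^\star),u}\,.
\end{equation}
Thus $\nabla h_0(x^\star)$ is orthogonal to $T_{x^\star}(V)$.

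To conclude, I would apply elementary linear algebra to the Jacobian matrix at $x^\star$. Since $T_{x^\star}(V)=\ker J(h)(x^\star)^\top$, its orthogonal complement is the column space of $J(h)(x^\star)$, which is exactly the linear span of $\nabla h_1(x^\star),\dots,\nabla h_l(x^\star)$. Therefore there exist scalars $\lambda_1^\star,\dots,\lambda_l^\star\in\R$ with $\nabla h_0(x^\star)=\sum_{j=1}^l \lambda_j^\star\nabla h_j(x^\star)$, and together with $h_j(x^\star)=0$ (which holds since $x^\star\in V=V(h)$), this yields \eqref{eq:KKT}. The only subtle point is ensuring that every tangent vector is actually realized by a curve in $V$; this is where the implicit function theorem together with $h_j\in I(V)$ (so that the locally described manifold really is contained in $V$) is essential, and it is the step where the rank hypothesis is used in a nontrivial way.
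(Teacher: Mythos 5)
Your proof is correct and follows essentially the same strategy as the paper's, just phrased in the dual ``tangent-vector'' language rather than the paper's ``parametrization'' language. The paper introduces a local diffeomorphism $\Phi:U\to V$ with $\Phi(t^\star)=x^\star$, pulls back $h_0$ and the $h_j$ along $\Phi$, and then observes that $\nabla h_0(x^\star)$ and all $\nabla h_j(x^\star)$ lie in the $(n-d)$-dimensional null space of $J(\Phi)(t^\star)$; since the $\nabla h_j(x^\star)$ already span an $(n-d)$-dimensional subspace by hypothesis, that null space equals their span, forcing $\nabla h_0(x^\star)$ into it. You instead push tangent vectors $u\in T_{x^\star}(V)$ into $V$ via curves, deduce $\nabla h_0(x^\star)\perp T_{x^\star}(V)$, and then use the linear-algebra identity $\ker\bigl(J(h)(x^\star)^\top\bigr)^\perp=\mathrm{Col}\bigl(J(h)(x^\star)\bigr)$ to identify the orthogonal complement with the span of the gradients. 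These two viewpoints are textbook-dual and buy essentially the same thing.

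One small soft spot: in the implicit-function-theorem step you argue that the zero set $W$ of the $n-d$ selected $h_j$'s coincides with $V$ near $x^\star$, but your justification (``$h_1,\dots,h_l$ vanish on $V$'') only yields $V\subset W$, not the reverse inclusion $W\subset V$, which is the one you actually need so that your curves stay inside $V$ and the local-minimality of $x^\star$ applies. The needed fact --- that a regular point of a $d$-dimensional real variety has a neighborhood where $V$ is a smooth $d$-manifold --- is standard, and the paper also invokes it without proof (by simply asserting the existence of the local diffeomorphism $\Phi$), so this is more of an omission than an error. (Also, you say ``full column rank $n-d$'' and ``subset of $n-d$ rows''; with the paper's convention $J(h)$ is $n\times l$, so you mean rank $n-d$ and a subset of $n-d$ \emph{columns}, i.e.\ of the $h_j$'s --- clear from context, but worth fixing.)
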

\begin{proof}
By assumption, $x^\star$ is a regular point of the manifold $V\subset \R^n$.
Then there exists a diffeomorphism $\Phi:U\to V$ for some open set $U\subset \R^d$ such that $x^\star=\Phi(t^\star)$ for some $t^\star\in U$.
The differential of $\Phi$ at $t\in U$ is defined by the linear mapping $D \Phi_t: \R^d\to T_{\Phi(t)}V$, $u\mapsto J(\Phi)(t)^\top u$, where $T_{a}V$ is the tangent space of $V$ at $a\in V$ (defined as in \eqref{eq:tangent}).
Since $x^\star$ is a regular points of $V$,  $D \Phi_{t^\star}$ is bijective.
From this, we get $\rank (J(\Phi)(t^\star)) =d$, which gives the null space of $J(\Phi)(t^\star)$, denoted by $	J(\Phi)(t^\star)^\perp$, has dimension $n-d$ thanks to the rank--nullity theorem.
By assumption, $t^\star$ is a local minimizer of $h_0\circ \Phi$ on $U$.
It implies that $0=\nabla (h_0\circ \Phi)(t^\star)=J(\Phi)(t^\star)\times \nabla h_0(x^\star)$, which gives $\nabla h_0(x^\star)$ is in $J(\Phi)(t^\star)^\perp$.
In addition, for $j=1,\dots,l$, $(h_j\circ \Phi)(t)=0$ for all $t\in U$. 
Take the gradient in $t$, we obtain $J(\Phi)(t)\times \nabla h_j(\Phi(t))=0$, for all $t\in U$, for $j=1,\dots,l$.
It implies that $\nabla h_j(\Phi(t))$, $j=1,\dots,l$, are in the null space of $J(\Phi)(t)$, for all $t\in U$.
By assumption, the linear span of $\nabla h_j(x^\star)$, $j=1,\dots,l$, has dimension $n-d$.
Since $J(\Phi)(t^\star)^\perp$ has dimension $n-d$,  $J(\Phi)(t^\star)^\perp$ is the linear span of $\nabla h_j(x^\star)$, $j=1,\dots,l$.
Hence the result follows since $\nabla h_0(x^\star)$ is in $J(\Phi)(t^\star)^\perp$.
\end{proof}
\begin{remark}
As shown in Freund's lecture note \cite[Theorem 11]{freund2004optimality}, the Karush--Kuhn--Tucker conditions \eqref{eq:KKT} hold for problem \eqref{eq:pop} at $x^\star$ when the linear independence constraint qualification is satisfied, i.e., the gradients $\nabla h_j(x^\star)$, $j=1,\dots,l$, are linearly independent in $\R^n$, which is equivalent to that $J(h)(x^\star)$ has rank $l$.
For comparison purposes, we make a weaker assumption in Lemma \ref{lem:KKT} that $J(h)(x^\star)$ has rank $n-\dim(V)$. 
Similarly to \cite[Exercise 17 b, page 495]{cox2013ideals}, we obtain $l\ge n-\dim(V)$ in general.
Note that the twisted cubic $V = \{(t,t^2,t^3)\in \R^3\,:\,t\in \R\}$ is a one-dimensional variety defined by $x_2-x_1^2$ and $x_3-x_1^3$, but the ideal $I(V)$ is generated by the vector of three polynomials $h=(x_1x_3-x_2^2,x_2-x^2_1,x_3-x_1 x_2)$. Thus it holds that $l=3>2=n-\dim(V)$ in this example.
\end{remark}

To prove that the largest rank assumption of $J(h)(x^\star)$ in Lemma \ref{lem:KKT} cannot be removed, consider the following example:
\begin{example}\label{exam:KKT.not.hold}
Let $n=2$, $h_0=x_1$, and $h=x_1^3-x_2^2$.
Then $x^\star=(0,0)$ is the unique global minimizer for problem \eqref{eq:pop}.
Moreover, the Karush--Kuhn--Tucker conditions do not hold for problem \eqref{eq:pop} at $x^\star$.
Indeed, for any $\lambda\in\R$, we get
\begin{equation}
\nabla h_0(x^\star)-\lambda\nabla h(x^\star)=\begin{bmatrix}
1\\
0
\end{bmatrix}-\lambda\begin{bmatrix}
3x_1^{*2}\\
-2x_2^\star
\end{bmatrix}=\begin{bmatrix}
1\\
0
\end{bmatrix}\ne 0\,.
\end{equation}
Note that $V(h)$ has dimension $d=1$, and $J(h)=\nabla h=\begin{bmatrix}
3x_1^{2}\\
-2x_2
\end{bmatrix}$.
It is not hard to check that $J(h)(x^\star)$ has rank  $0< 1=n-d$.
\end{example}

Given $\bar h:=(h_0,h)$ with $h:=(h_1,\dots,h_l)$ and  $h_j\in\R[x]$, we denote by $\bar h_{\KKT}$ the vector of polynomials in $\R[x, \lambda]$ associated with the Karush--Kuhn--Tucker conditions defined by
\begin{equation}
\bar h_{\KKT}:=(h,\nabla h_0-\sum_{j=1}^l \lambda_j \nabla h_j)\,,
\end{equation}
where $\lambda=(\lambda_1,\dots,\lambda_l)$.
The condition \eqref{eq:KKT} can be written as $(x^\star,\lambda^\star)\in V(\bar h_{\KKT})$ for some $\lambda^\star\in\R^l$.
\subsection{Semi-algebraic set}

Given $g=(g_1,\dots,g_m)$ with $g_j\in\R[x]$, we denote by $S(g)$ the basic semi-algebraic set associated with $g$, i.e.,
\begin{equation}
    S(g):=\{x\in\R^n\,:\,g_j(x)\ge 0\,,\,j=1,\dots,m\}\,.
\end{equation}

A semi-algebraic subset of $\R^n$ is a subset of the following form
\begin{equation}\label{eq:def.semi.set}
\bigcup_{i=1}^t\bigcap_{j=1}^{r_i}\{x\in\R^n\,:\,f_{ij}(x)*_{ij}0\}\,,
\end{equation}
where $f_{ij}\in\R[x]$ and $*_{ij}\in\{>,=\}$.
Note that \eqref{eq:def.semi.set} is the union of finitely many basic semi-algebraic sets.

Given two semi-algebraic sets $A\subset \R^n$ and $B\subset \R^m$, we say that a mapping $f : A \to B$ is semi-algebraic if its graph $\{(x,f(x))\,:\,x\in A\}$ is a semi-algebraic set in $\R^{n+m}$.
A semi-algebraic subset $A\subset \R^n$ is said to be semi-algebraically path connected if for every $x,y$ in $A$, there exists a continuous semi-algebraic mapping $\phi:[0,1] \to A$ such that $\phi(0) = x$ and $\phi(1) = y$.

The following lemma can be found in \cite[Proposition 1.6.2 (ii)]{pham2016genericity}:
\begin{lemma}\label{lem:composit.semi-al}
Compositions of semi-algebraic maps are semi-algebraic.
\end{lemma}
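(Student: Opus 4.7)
The plan is to exploit the Tarski--Seidenberg projection theorem, which guarantees that the image of a semi-algebraic set under a linear projection is again semi-algebraic. Let $f:A\to B$ and $g:B\to C$ be semi-algebraic maps with $A\subset\R^n$, $B\subset\R^m$, $C\subset\R^p$. I want to show that the graph
\begin{equation}
\Gamma_{g\circ f}:=\{(x,z)\in A\times C\,:\,z=(g\circ f)(x)\}\subset\R^{n+p}
\end{equation}
is semi-algebraic.

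First, I would assemble a ``combined'' graph in the intermediate space $\R^{n+m+p}$, namely
\begin{equation}
\Gamma:=(\Gamma_f\times \R^p)\cap (\R^n\times \Gamma_g)\,,
\end{equation}
where $\Gamma_f\subset\R^{n+m}$ and $\Gamma_g\subset\R^{m+p}$ are the graphs of $f$ and $g$, which are semi-algebraic by assumption. Since the class of semi-algebraic sets is stable under Cartesian products with $\R^k$ and under finite intersections (both are immediate from the form \eqref{eq:def.semi.set}), the set $\Gamma$ is a semi-algebraic subset of $\R^{n+m+p}$. Explicitly, $(x,y,z)\in\Gamma$ iff $y=f(x)$ and $z=g(y)$, so consistency with the composition is built in.

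Second, I would observe that $\Gamma_{g\circ f}$ is precisely the image of $\Gamma$ under the linear projection $\pi:\R^{n+m+p}\to\R^{n+p}$, $(x,y,z)\mapsto(x,z)$: indeed, $(x,z)\in \pi(\Gamma)$ iff there exists $y\in B$ with $y=f(x)$ and $z=g(y)$, which is equivalent to $z=(g\circ f)(x)$. Applying the Tarski--Seidenberg theorem (projecting out the $m$ middle coordinates one at a time if needed) yields that $\pi(\Gamma)=\Gamma_{g\circ f}$ is semi-algebraic, which is exactly the conclusion.

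The only nontrivial ingredient is the Tarski--Seidenberg theorem itself; everything else is set-theoretic bookkeeping. Since the paper already relies throughout on standard real-algebraic-geometry machinery, I would simply cite Tarski--Seidenberg (as found, e.g., in Bochnak--Coste--Roy or in Pham's monograph already referenced) and present the two displays above; no delicate step is expected beyond verifying that the Cartesian products and intersection are taken in the correct ambient spaces.
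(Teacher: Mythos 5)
The paper does not give its own proof of this lemma: it is cited verbatim from \cite[Proposition 1.6.2 (ii)]{pham2016genericity}, so there is no in-paper argument to compare against. Your proof is the standard textbook one and it is correct: realize $\Gamma_{g\circ f}$ as the image, under the forgetful projection $(x,y,z)\mapsto(x,z)$, of the semi-algebraic set $(\Gamma_f\times\R^p)\cap(\R^n\times\Gamma_g)\subset\R^{n+m+p}$, then invoke Tarski--Seidenberg. The one small point worth stating explicitly, which you use tacitly, is that $(y,z)\in\Gamma_g$ already forces $y\in B$, so the existential quantifier over $y$ in the projection is indeed equivalent to $y=f(x)$ and $z=g(f(x))$ for the unique such $y$. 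Beyond that, there is nothing to add; this is essentially the same argument the cited reference uses.
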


The following lemma is given in \cite[Theorem 1.8.1]{pham2016genericity}:
\begin{lemma}\label{lem:semial.func.anal}
Let $f:(a, b)\to \R$ be a semi-algebraic
function. Then there are $a = a_0 < a_1 < \dots < a_s < a_{s+1} = b$ such that, for each $i = 0,\dots, s$, the restriction $f|_{(a_i,a_{i+1})}$ is analytic.
\end{lemma}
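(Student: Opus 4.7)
The plan is to combine the cell decomposition theorem for semi-algebraic subsets of $\R^2$ with the analytic implicit function theorem. Since $f:(a,b)\to\R$ is semi-algebraic, its graph $G=\{(x,f(x)):x\in(a,b)\}$ is a semi-algebraic subset of $\R^2$. The cell decomposition theorem (see, e.g., Bochnak--Coste--Roy) gives a finite partition of $(a,b)$ into points and open intervals such that, on each open interval $I$, the piece of $G$ over $I$ is one continuous branch of an algebraic set $\{P=0\}$ for some nonzero $P\in\R[x,y]$.

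Next, on each such interval $I$ I would choose $P$ to be a minimal-degree-in-$y$ polynomial annihilating $f|_I$, i.e.\ $P(x,f(x))=0$ on $I$. The minimality ensures that $\partial_y P(x,f(x))$ is not identically zero on $I$: otherwise $f|_I$ would be a multiple root of $P(x,\cdot)$, and replacing $P$ by $P/\gcd(P,\partial_y P)$ in $\R(x)[y]$ (cleared of denominators) would give a lower $y$-degree polynomial annihilating $f|_I$, a contradiction.

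Now consider the set $Z:=\{x\in I:\partial_y P(x,f(x))=0\}$. Because $\partial_y P(x,f(x))$ is a semi-algebraic function of $x$, $Z$ is a semi-algebraic subset of $\R$, hence a finite union of points and open sub-intervals. No sub-interval can occur, as that would again contradict the minimality of $P$. Therefore $Z$ is finite, and after adding its points to the partition of $I$, the analytic implicit function theorem applied to $P(x,y)=0$ (noting that $P$ is a real-analytic function and $\partial_y P(x,f(x))\neq 0$ on each sub-interval) yields that $f$ is real analytic on each resulting open sub-interval.

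Assembling the endpoints from the initial cell decomposition together with the finitely many points in $Z$ for each interval produces the desired finite sequence $a=a_0<a_1<\cdots<a_{s+1}=b$ on whose open pieces $f$ is analytic. The main obstacle I anticipate is invoking cell decomposition in a form that simultaneously delivers \emph{continuity} of $f|_I$ and a single defining polynomial for the corresponding branch of $G$; once this is in place, the algebraic manipulation to secure $\partial_y P\not\equiv 0$ and the appeal to the analytic implicit function theorem are routine.
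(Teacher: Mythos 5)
The paper does not actually prove this lemma: it is cited directly as \cite[Theorem 1.8.1]{pham2016genericity}, so you are supplying an argument where the paper supplies only a reference. Your route --- produce an algebraic relation $P(x,f(x))\equiv 0$ on each piece, then invoke the analytic implicit function theorem off the vanishing set of $\partial_y P$ --- is the standard one (cf.\ Bochnak--Coste--Roy, or Coste's lecture notes), and the first minimality step is essentially correct, if overbuilt: once $\partial_y P(x,f(x))\equiv 0$ on $I$ is assumed, observe that $\partial_y P$ itself has strictly smaller $y$-degree and also annihilates $f|_I$, so minimality of $P$ forces $\partial_y P=0$ as a polynomial; then $P\in\R[x]$ vanishes on the infinite set $I$, hence $P=0$, a contradiction. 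No passage to $P/\gcd(P,\partial_y P)$ is needed.

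The genuine gap is in the sentence ``No sub-interval can occur [in $Z$], as that would again contradict the minimality of $P$.'' Minimality of $P$ was taken among polynomials annihilating $f$ on \emph{all} of $I$. If a subinterval $J\subsetneq I$ lies inside $Z$, then $\partial_y P$ annihilates $f$ only on $J$; nothing forces $\partial_y P(x,f(x))=0$ on $I\setminus J$, so minimality over $I$ is not contradicted. (Propagating a vanishing identity from a subinterval to the whole interval is essentially the content of the lemma, so it cannot be taken for granted here.) One clean repair: shrink $I$ so that $P$ can be taken \emph{irreducible} in $\R(x)[y]$ (factor any annihilator over $\R(x)[y]$ and pass to the subinterval on which one irreducible factor vanishes). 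If $Z$ were then infinite, the resultant $\mathrm{Res}_y(P,\partial_y P)\in\R[x]$ would vanish at infinitely many $x$, hence identically, forcing $P$ and $\partial_y P$ to share a factor of positive $y$-degree; irreducibility gives $P\mid\partial_y P$, whence $\partial_y P=0$ and $P$ is independent of $y$, which is impossible since $P(x,f(x))\equiv 0$ on $I$ would then force $P=0$. Alternatively, argue by induction on $\deg_y P$: a subinterval $J\subset Z$ is handled by the inductive hypothesis applied to the lower-degree annihilator $\partial_y P$, and the finitely many components of $I\setminus Z$ are handled by the implicit function theorem exactly as you describe. With either repair the proof closes; as written, this step does not.
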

The following lemma follows from the mean value theorem:
\begin{lemma}\label{lem:mean.val}
Let $f:[0,1]\to \R$ be a continuous piecewise-differentiable function, i.e., there exist $0=a_1<\dots<a_r=1$ such that $f$ is continuous and $f$ is differentiable on each open interval $(a_i,a_{i+1})$.
Assume that $f$ has zero subgradient.
Then $f(0)=f(1)$.
\end{lemma}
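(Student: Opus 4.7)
The plan is to reduce the statement to a standard piecewise application of the classical mean value theorem. First, I would fix the partition $0=a_1<a_2<\dots<a_r=1$ on which $f$ is guaranteed to be differentiable on every open subinterval, and observe that the hypothesis ``$f$ has zero subgradient'' forces $f'(x)=0$ for every $x\in\bigcup_{i=1}^{r-1}(a_i,a_{i+1})$, since wherever $f$ is differentiable the subgradient reduces to the ordinary derivative.

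Next, I would fix an arbitrary index $i\in\{1,\dots,r-1\}$ and show that $f$ is constant on $[a_i,a_{i+1}]$. For this, pick $a_i<s<t<a_{i+1}$. Since $f$ is continuous on $[s,t]$ and differentiable on $(s,t)$, the classical mean value theorem provides some $\xi\in(s,t)$ with $f(t)-f(s)=f'(\xi)(t-s)=0$. Letting $s\downarrow a_i$ and $t\uparrow a_{i+1}$ and using the continuity of $f$ on $[0,1]$, I obtain $f(a_i)=f(a_{i+1})$.

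Finally, telescoping over $i=1,\dots,r-1$ yields $f(0)=f(a_1)=f(a_2)=\dots=f(a_r)=f(1)$, which is the claim. There is no real obstacle here: the only delicate point is that the MVT is stated for functions that are differentiable on an open interval and continuous on its closure, so the argument has to be carried out on subintervals $[s,t]\subset(a_i,a_{i+1})$ first and then extended to the endpoints by continuity, which is exactly what the piecewise-differentiability hypothesis allows.
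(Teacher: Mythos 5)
Your proof is correct and follows essentially the same route as the paper: apply the mean value theorem on each subinterval $(a_i,a_{i+1})$ to conclude $f(a_i)=f(a_{i+1})$, then telescope. Your version is slightly more careful in spelling out the limiting argument at the endpoints, but it is the same argument.
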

\begin{proof}
By using the mean value theorem on each open interval $(a_i,a_{i+1})$, we get $f(a_i)=f(a_{i+1})$.
Hence $f(0)=f(a_1)=\dots=f(a_{r})=f(1)$ yields the result.
\end{proof}

Given $n,d,s\in\N$, we define
\begin{equation}\label{eq:bound.connected}
c(n,d,s):=d(2d-1)^{n+s-1}\,.
\end{equation}

We recall in the following lemma the upper bound on the number of connected components of a basic semi-algebraic set is stated by Coste in \cite[Proposition 4.13]{coste2000introduction}:
\begin{lemma}\label{lem:num.connected}
Let $g_1,\dots,g_m,h_1,\dots,h_l\in\R_d[x]$ with $d\ge 2$. 
The number of (semi-algebraically path) connected components of $S(g)\cap V(h)$ is not greater than $c(n,d,m+l)$.
\end{lemma}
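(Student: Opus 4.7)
The plan is to reduce the semi-algebraic situation to a purely algebraic one by means of slack variables, and then invoke the classical Oleinik--Petrovsky--Milnor--Thom bound on the number of connected components of a real algebraic variety, in the precise form given by Coste.

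First, for each inequality $g_j \ge 0$ I would introduce a fresh variable $y_j$ and consider, in the ambient space $\R^{n+m}$, the real algebraic variety
\begin{equation}
W:=\{(x,y)\in \R^{n+m}\,:\,h_i(x)=0\,,\,i=1,\dots,l\,,\ g_j(x)-y_j^2=0\,,\,j=1,\dots,m\}\,.
\end{equation}
Since $d\ge 2$, each of the $l+m$ defining polynomials has degree at most $d$, and $W$ lives in $\R^{n+m}$. The projection $\pi:\R^{n+m}\to \R^n$, $(x,y)\mapsto x$, is continuous and semi-algebraic, and one checks directly that $\pi(W)=S(g)\cap V(h)$: a point $x$ belongs to $S(g)\cap V(h)$ iff $h_i(x)=0$ for every $i$ and $g_j(x)\ge 0$ for every $j$, which is equivalent to the existence of real numbers $y_j=\pm\sqrt{g_j(x)}$ with $(x,y)\in W$.

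Next, I would use the general fact that a continuous surjection cannot increase the number of connected components. If $W_1,\dots,W_N$ denote the (semi-algebraically path) connected components of $W$, then each $\pi(W_i)$ is connected, and $S(g)\cap V(h)=\bigcup_{i=1}^N \pi(W_i)$; consequently the number of connected components of $S(g)\cap V(h)$ is at most $N$, the number of connected components of $W$.

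Finally, I would apply Milnor's theorem in the form used by Coste \cite[Proposition 4.13]{coste2000introduction}: a real algebraic variety in $\R^{n+m}$ defined by polynomials of degree at most $d\ge 2$ has at most $d(2d-1)^{n+m-1}$ connected components. Since $l\ge 0$, this quantity is bounded above by $d(2d-1)^{n+m+l-1}=c(n,d,m+l)$, which yields the claimed estimate. The only delicate point is the choice of lifting: using one slack variable per inequality ensures that the lifted object is a genuine algebraic variety of degree $d$ (this is where $d\ge 2$ is needed so that the $y_j^2$ terms do not raise the degree), after which everything reduces to a classical fact; there is no substantial obstacle, as the bound $c(n,d,m+l)$ has been chosen sufficiently generous to absorb the slack-variable overhead.
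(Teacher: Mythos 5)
The paper does not prove this lemma at all; it is quoted directly from Coste's lecture notes as Proposition 4.13. Your derivation from first principles is mathematically correct: the slack-variable lift $W\subset\R^{n+m}$ is cut out by $l+m$ polynomials of degree at most $d$ (the hypothesis $d\ge 2$ ensuring that the $y_j^2$ terms do not raise the degree), the continuous semi-algebraic projection $\pi$ sends each component of $W$ onto a connected subset of $S(g)\cap V(h)$, and since these images cover $S(g)\cap V(h)$ the number of its components cannot exceed that of $W$; the Oleinik--Petrovsky--Milnor--Thom bound on $W$ then gives $d(2d-1)^{n+m-1}$, which you relax to $c(n,d,m+l)=d(2d-1)^{n+m+l-1}$. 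In fact your argument proves the sharper estimate $d(2d-1)^{n+m-1}$, with no dependence on the number $l$ of equations. The one flaw is bibliographical: you cite Coste's Proposition 4.13 for the bound on connected components of an \emph{algebraic variety}, but that proposition, as the paper invokes it, is exactly the semi-algebraic statement you are trying to prove; the algebraic input you need is Milnor's 1964 theorem on Betti numbers of real algebraic sets (see also Bochnak--Coste--Roy, \emph{Real Algebraic Geometry}, Theorem 11.5.2). You should also remark in passing that for semi-algebraic sets, connected and semi-algebraically path-connected components coincide (or observe that $\pi$ composed with a semi-algebraic path in a component of $W$ yields a semi-algebraic path in its image), so the topological bound transfers to the statement as phrased.
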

\subsection{Sums of squares}
Denote by $\Sigma^2[x]$ (resp. $\Sigma^2_r[x]$) the cone of sums of squares of polynomials in $\R[x]$ (resp. $\R_r[x]$).
Given $g_1,\dots,g_m\in\R[x]$, let $P_r(g)[x]$ be the truncated preordering of order $r\in\N$ associated with $g=(g_1,\dots,g_m)$, i.e., 
\begin{equation}
    P_r(g)[x]:=\{\sum_{\alpha\in\{0,1\}^m} \sigma_\alpha g^\alpha\,:\,\sigma_\alpha\in\Sigma^2[x]\,,\,\deg(\sigma_\alpha g^\alpha)\le 2r\}\,,
\end{equation}
where $\alpha=(\alpha_1,\dots,\alpha_m)$ and $g^\alpha:=g_1^{\alpha_1}\dots g_m^{\alpha_m}$.
If $m=0$, it holds that $P_r(g)[x]=\Sigma^2_r[x]$.

Given $h_1,\dots,h_l\in\R[x]$, let $I_r(h)[x]$ be the truncated ideal of order $r$ defined by $h$, i.e.,
\begin{equation}
    I_r(h)[x]:= \{\sum_{j=1}^l h_j \psi_j\,:\,\psi_j\in\R[x]\,,\,\deg(h_j \psi_j)\le 2r\}\,.
\end{equation}

We denote by $\bit(d)$ the number of bits of $d\in\N$, i.e.,
\begin{equation}
\bit(d):=
 \begin{cases}
          1 & \text{if } d = 0\,,\\
	    k & \text{if } d \ne 0 \text{ and } 2^{k-1}\le d < 2^k.
         \end{cases}
\end{equation}
Given $n,d,s\in\N$, we define 
\begin{equation}\label{eq:bound.Krivine}
b(n,d,s):=2^{
2^{\left(2^{\max\{2,d\}^{4^{n}}}+s^{2^{n}}\max\{2, d\}^{16^{n}\bit(d)} \right)}}\,.
\end{equation}

We recall the degree bounds for Krivine--Stengle's Nichtnegativstellens\"atze by Lombardi, Perrucci, and Roy \cite{lombardi2020elementary} in the following two lemmas:
\begin{lemma}\label{lem:pos}
Let $g_1,\dots,g_m,h_1,\dots,h_l$ in $\R_d[x]$. 
Assume that $S(g)\cap V(h)=\emptyset$ with $g:=(g_1,\dots,g_m)$ and $h:=(h_1,\dots,h_l)$. 
Set $r=\frac{1}{2}\times b(n,d,m+l+1)$.
Then it holds that $-1 \in P_r(g)[x]+I_r(h)[x]$.
\end{lemma}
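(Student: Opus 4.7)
The plan is to invoke the effective Positivstellensatz of Lombardi, Perrucci, and Roy \cite{lombardi2020elementary} as a black box, since the statement is exactly a specialization of their main theorem: one verifies that the hypothesis $S(g) \cap V(h) = \emptyset$ is precisely the infeasibility condition they treat, and then the explicit tower-of-exponentials bound $b(n, d, s)$ in \eqref{eq:bound.Krivine} is chosen to match the bound in their paper (up to the factor $\tfrac{1}{2}$ coming from our convention that $\deg(\sigma_\alpha g^\alpha) \le 2r$ rather than $\le r$). So the body of the proof is essentially a translation between their notation and ours, and a check that $m + l + 1$ is the correct input to their bound (the $+1$ accounting for the polynomial $-1$ that we wish to certify).

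If one wanted to reprove it from scratch, the natural approach would proceed in three layers. \emph{First}, establish the qualitative Krivine--Stengle statement: $S(g) \cap V(h) = \emptyset$ implies $-1 \in P(g)[x] + I(h)[x]$. The classical argument uses Tarski's transfer principle and the real spectrum, which is inherently non-effective. \emph{Second}, replace that argument by a constructive one. The Lombardi--Perrucci--Roy strategy is to formalize the reasoning as a ``dynamical'' proof in the theory of ordered fields: one performs a case split on the signs of all relevant auxiliary polynomials (leading coefficients, discriminants, resultants appearing during a cylindrical-algebraic-decomposition-like reduction), and in each branch produces an algebraic identity that serves as the local certificate. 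Finally one glues the branches by adding them with appropriate squares, so every sign assumption is absorbed into the preordering. \emph{Third}, control degree growth: each case split replaces polynomials of degree $\le d$ by derived polynomials of degree $\le O(d^2)$, and the depth of the recursion is controlled by $n$ and by $m + l + 1$, which is how the doubly-exponential (in fact towering) bound $b(n, d, m + l + 1)$ arises.

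The main obstacle, and the reason the result is nontrivial enough to be cited rather than rederived, is precisely the bookkeeping in the third layer. One must carefully track how degrees propagate through an iterated Hörmander--{\L}ojasiewicz-style construction while ensuring the final certificate still lives in $P_r(g)[x] + I_r(h)[x]$ with $r$ bounded as claimed. For the purpose of this paper, however, the quoted bound suffices as a modular ingredient and we do not need to reproduce this combinatorial analysis; the proof therefore reduces to the citation above.
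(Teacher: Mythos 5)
Your proposal matches the paper exactly: the paper states Lemma~\ref{lem:pos} as a direct recall of the effective Positivstellensatz degree bound of Lombardi, Perrucci, and Roy~\cite{lombardi2020elementary}, with no further proof given. Your extra commentary on how one would rederive it from scratch is accurate background but not part of the paper's argument, which—like yours—treats the bound $b(n,d,m+l+1)$ as a cited black box.
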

\begin{lemma}\label{lem:pos2}
Let $p,g_1,\dots,g_m,h_1,\dots,h_l$ in $\R_d[x]$. 
Assume that $p$ vanishes on $S(g)\cap V(h)$ with $g:=(g_1,\dots,g_m)$ and $h:=(h_1,\dots,h_l)$. 
Set $r:=\frac{1}{2}\times b(n,d,m+l+1)$ and $s:=2\lfloor r/d\rfloor$.
Then it holds that $-p^s \in P_r(g)[x]+ I_r(h)[x]$.
\end{lemma}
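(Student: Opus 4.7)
The plan is to reduce Lemma \ref{lem:pos2} to the already-quoted emptiness statement in Lemma \ref{lem:pos} via the classical Rabinowitsch trick. Since $p$ vanishes on $S(g)\cap V(h)$, the system $\{g_1\ge 0,\dots,g_m\ge 0,\,h_1=0,\dots,h_l=0,\,p\ne 0\}$ has no real solution. Introducing an auxiliary variable $t$ and replacing the open condition $p\ne 0$ by the equation $tp-1=0$, one obtains a basic semi-algebraic set in $\R^{n+1}$ of the form $S(g)\cap V(h,tp-1)$ that is empty. This is the shape that Lemma \ref{lem:pos} directly handles.

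Applying Lemma \ref{lem:pos} in the extended ring $\R[x,t]$ to this empty set yields an identity
\begin{equation*}
-1 \;=\; \sigma(x,t) \;+\; \sum_{j=1}^{l} h_j(x)\,\psi_j(x,t) \;+\; (tp(x)-1)\,\varphi(x,t),
\end{equation*}
where $\sigma\in P_{r'}(g)[x,t]$, with $r'$ controlled by $b(n+1,d,m+l+2)$. The next step is to clear the variable $t$: substitute $t=1/p(x)$ into the identity (which kills the $(tp-1)$ term), and then multiply through by $p(x)^{2N}$ for $N$ large enough that every monomial in $t$ appearing in $\sigma$ and the $\psi_j$'s is absorbed into a polynomial in $x$. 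The sum-of-squares character of $\sigma$ is preserved because one multiplies by an even power of $p$, and one recovers a relation of the form $-p^{2N} \in P_{r}(g)[x] + I_{r}(h)[x]$ in $\R[x]$ alone. Choosing $2N=s$ and tracking how degrees transform under the substitution and multiplication yields the claimed exponent $s=2\lfloor r/d\rfloor$ with $r=\tfrac12 b(n,d,m+l+1)$.

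The main obstacle is the degree bookkeeping, not the algebraic idea: naively the Rabinowitsch reduction brings one into $n+1$ variables and with two extra defining polynomials ($tp-1$ and its role on each side), which would give a bound of shape $b(n+1,d,m+l+2)$, strictly worse than the claimed $b(n,d,m+l+1)$. Matching the bound therefore requires the sharper analysis of Lombardi--Perrucci--Roy \cite{lombardi2020elementary}, which avoids paying for the auxiliary variable by carrying out the Rabinowitsch elimination inside the effective proof of Lemma \ref{lem:pos} rather than as a black-box reduction, and which also tracks the exponent $s$ as precisely $2\lfloor r/d\rfloor$ coming from the number of times $p$ must be multiplied in to clear all occurrences of $t$. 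Once that bookkeeping is accepted from their work, the extraction of $-p^s\in P_r(g)[x]+I_r(h)[x]$ is immediate from the Rabinowitsch identity above.
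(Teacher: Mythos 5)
The paper gives no proof of this lemma; both Lemma~\ref{lem:pos} and Lemma~\ref{lem:pos2} are stated as quoted degree bounds from Lombardi, Perrucci, and Roy \cite{lombardi2020elementary}, with no argument supplied. Your Rabinowitsch reduction of Lemma~\ref{lem:pos2} to Lemma~\ref{lem:pos} is algebraically sound at the qualitative level of ``some even power of $p$ lies in $-(P(g)[x]+I(h)[x])$'', but, as you yourself note, it cannot yield the specific constants in the statement, and that is a genuine gap rather than mere bookkeeping to be waved away. Two concrete mismatches: (i) passing to $\R[x,t]$ raises the variable count to $n+1$ and the equation count by one, and the new equation $tp-1$ has degree $d+1$, so Lemma~\ref{lem:pos} applied to the lifted system gives a bound of shape $b(n+1,d+1,m+l+2)$ — a strictly larger tower of exponentials than the claimed $b(n,d,m+l+1)$, not a small overhead; (ii) the exponent $s=2\lfloor r/d\rfloor$ is tied to the internal structure of the Lombardi--Perrucci--Roy construction, and the ``$2N$'' produced by clearing denominators after setting $t=1/p$ is in no way controlled to equal $s$ by anything in your argument.

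Your final paragraph effectively concedes both points by deferring to ``the sharper analysis of Lombardi--Perrucci--Roy'' as a black box. Once you do that, the Rabinowitsch scaffolding is no longer carrying the proof, and you have reduced the argument to citing the same source the paper cites — which is exactly what the paper does, minus the reduction. So the proposal is a plausible heuristic for why such a statement should hold, but it is not a proof of the quantitative claim, and the quantitative claim is the entire content of the lemma as used downstream (e.g., in Lemma~\ref{lem:quadra} and Lemma~\ref{lem:mom.sos}).
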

\subsection{Nichtnegativstellens\"atze on regular loci}
\label{sec:proof.rep}
Denote by $|\cdot|$ the cardinality of a set and by $\delta_{ij}$ the Kronecker delta function at $(i,j)\in\N^2$.

We state in the following lemma a sums of squares-based representation with degree bound for a polynomial which has finitely many non-negative values on a real algebraic variety:
\begin{lemma}\label{lem:quadra}
Let $h_0,h_1,\dots,h_l$ in $\R_{d}[x]$. 
Assume that $h_0$ is non-negative on $V(h)$  and $h_0(V(h))$ is finite with $h=(h_1,\dots,h_l)$. 
Set $r:=|h_0(V(h))|$ and $u:=\frac{1}{2}\times b(n,d,l+1)$.
Then there exists $\sigma\in \Sigma^2_w[x]$ with $w=\max\{d(r-1),u+d\}$ such that $h_0 - \sigma$ vanishes on $V(h)$.
\end{lemma}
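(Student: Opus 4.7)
The plan is to construct $\sigma$ explicitly by univariate Lagrange interpolation on the finite image of $h_0$. First, enumerate $h_0(V(h))=\{a_1,\dots,a_r\}$; because $h_0\ge 0$ on $V(h)$, each $a_i$ is nonnegative, so the real square roots $\sqrt{a_i}$ make sense. This nonnegativity is the one place where the hypothesis ``$h_0\ge 0$ on $V(h)$'' is genuinely used — without it, the interpolation step below would be forced into the complex numbers.

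Next, I would take the unique univariate polynomial $p(t)\in\R[t]$ of degree at most $r-1$ satisfying $p(a_i)=\sqrt{a_i}$ for $i=1,\dots,r$, and set $\sigma(x):=p(h_0(x))^{2}$. Being a single square, $\sigma$ lies in $\Sigma^2[x]$, and its SOS half-degree satisfies
\begin{equation*}
\deg(p\circ h_0)\ \le\ (r-1)\deg(h_0)\ \le\ d(r-1)\ \le\ w\,,
\end{equation*}
so $\sigma\in\Sigma^{2}_{w}[x]$.

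The vanishing of $h_0-\sigma$ on $V(h)$ is then immediate: for any $x\in V(h)$, there exists $i$ with $h_0(x)=a_i$, and hence $\sigma(x)=p(a_i)^{2}=a_i=h_0(x)$. Therefore $h_0-\sigma$ vanishes pointwise on $V(h)$, as required.

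There is no serious obstacle here; the argument reduces to a one-line interpolation construction, with the nonnegativity assumption doing the only real work. I note that the second term $u+d$ appearing in $w=\max\{d(r-1),u+d\}$ plays no role in producing $\sigma$ and is presumably inserted so that the lemma dovetails with later statements where the ideal-membership bound of Lemma~\ref{lem:pos2} must also be invoked (requiring room to store multipliers of degree at most $d$ against ideal generators); it is absorbed trivially by the inequality $w\ge d(r-1)$ above.
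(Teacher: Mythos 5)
Your construction is correct for $r\ge 1$ and is essentially the same idea as the paper's, namely Lagrange interpolation through the finite set of image values. The paper builds the $r$ Lagrange basis polynomials $p_j(x)=\prod_{i\ne j}\frac{h_0(x)-t_i}{t_j-t_i}$ and sets $\sigma=\sum_i t_i p_i^2$, a sum of $r$ squares weighted by the nonnegative image values; you instead interpolate $\sqrt{a_i}$ at $a_i$ by a single univariate $p$ and take the single square $\sigma=(p\circ h_0)^2$. Both give the same degree bound $d(r-1)$ and both use the hypothesis $h_0\ge 0$ on $V(h)$ in the same place (nonnegativity of the $a_i$). Your variant is arguably slightly cleaner since it produces a rank-one SOS, but the two arguments are not meaningfully different in substance.

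There is one small omission: your enumeration $h_0(V(h))=\{a_1,\dots,a_r\}$ and the subsequent interpolation implicitly assume $r\ge 1$. The hypotheses allow $r=0$, i.e.\ $V(h)=\emptyset$, in which case the interpolation has no nodes. This case is trivial — take $\sigma=0$, and $h_0-\sigma$ vacuously vanishes on $\emptyset$ — but it should be stated, since the paper treats it as a separate Case~1. That case is also where the $u+d$ term in $w$ is actually invoked in the paper's proof: there they certify $\sigma$ via Krivine--Stengle (Lemma~\ref{lem:pos}) rather than taking $\sigma=0$. So your guess that $u+d$ is merely there to ``dovetail with later statements'' is not quite right for this lemma's own proof; it is used in the empty-variety case, although as you correctly sense it is not needed if one takes the trivial $\sigma=0$ there.
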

\begin{proof}
Consider the following two cases:
\begin{itemize}
\item Case 1: $r=0$. It is obvious that  $V(h)=\emptyset$.
Lemma \ref{lem:pos} says that $-1 \in \Sigma^2_u[x]+I_u(h)[x]$.
It implies that there exists $q \in P_u(g)[x]$ such that
$-1 = q$ on $V(h)$. 
We write $h_0 = s_1 - s_2$, where $s_1 =(h_0+\frac{1}{2})^2$ and $s_2 = h_0^2+\frac{1}{4}$ are in $\Sigma^2[x]$.
From this we get $h_0 = s_1 + q s_2$ on $V(h)$.
Letting $\sigma = s_1 +qs_2$ gives $\sigma\in \Sigma^2_{u+d}[x]\subset \Sigma^2_{w}[x]$ since $w\ge u+d$.
Thus $h_0-\sigma$ vanishes on $V(h)$.

\item Case 2: $r>0$. By assumption, we can assume that
$h_0(V(h)) = \{t_1 ,\dots, t_r \} \subset [0,\infty)$,
where $t_i\ne t_j$ if $i\ne j$.
For $j=1,\dots,r$, let
$W_j:=V(h,h_0-t_j)$.
Then $W_j$ is a real variety defined by $l+1$ polynomials in $\R_{d}[x]$.
It is clear that $h_0(W_j)=\{t_j\}$.
Define the following polynomials:
\begin{equation}\label{eq:lagrange.pol}
p_j(x):=\prod_{i\ne j}\frac{h_0(x)-t_i}{t_j-t_i}\,,\,j=1,\dots,r\,.
\end{equation}
It is easy to check that $p_j(W_i)=\{\delta_{ji}\}$ and $\deg(p_j)\le d(r-1)$.
Note that $h_0=t_i\ge 0$ on $W_i$, for $i=1,\dots,r$.  
Now letting $\sigma =\sum_{i=1}^r t_i p_i^2$,
we obtain $\sigma\in \Sigma^2_w[x]$ since $\deg(t_i p_i^2)\le 2\deg(p_i)\le 2d(r-1)\le 2w$.
Hence $h_0 - \sigma$ vanishes on $V(h)=W_1\cup\dots\cup W_r$, yielding the result.
\end{itemize}
\end{proof}

The following lemma is similar to \cite[Lemma 3.3]{demmel2007representations} but is proved by using the tools from real algebraic geometry (instead of the ones from complex algebraic geometry):
\begin{lemma}\label{lem:constant.KKT}
Let $h_1,\dots,h_l$ in $\R[x]$.
Let $h_0$ be a polynomial in $\R[x]$.
Let $W$ be a semi-algebraically path connected component of $V(\bar h_{\KKT})$, where $\bar h:=(h_0,\dots,h_l)$. Then $h_0$ is constant on $W$.
\end{lemma}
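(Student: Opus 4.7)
The plan is to show that if $W$ is semi-algebraically path connected in $V(\bar h_{\KKT})$, then $h_0$ takes the same value on any two points of $W$. Fix $w_0,w_1\in W$ and pick a continuous semi-algebraic path $\phi:[0,1]\to W$ with $\phi(0)=w_0$ and $\phi(1)=w_1$; write $\phi(t)=(x(t),\lambda(t))$ with $x(t)\in\R^n$ and $\lambda(t)\in\R^l$. The function $f:t\mapsto h_0(x(t))$ is a composition of semi-algebraic maps, hence semi-algebraic by Lemma \ref{lem:composit.semi-al}.

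Next, I apply Lemma \ref{lem:semial.func.anal} to each of the $n+l$ coordinate functions of $\phi$ and take the common refinement $0=a_0<a_1<\dots<a_s<a_{s+1}=1$ of the finite exceptional sets. On each open interval $(a_i,a_{i+1})$ every coordinate of $\phi$ is analytic, so $x$ and $\lambda$ are differentiable there; composing with the polynomial $h_0$ shows that $f$ is analytic on each $(a_i,a_{i+1})$, and differentiable with derivative given by the chain rule
\begin{equation}
f'(t)=\nabla h_0(x(t))^\top x'(t)\,.
\end{equation}

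On this same interval, since $\phi(t)\in V(\bar h_{\KKT})$, we have $h_j(x(t))=0$ for all $t\in(a_i,a_{i+1})$ and all $j=1,\dots,l$. Differentiating gives $\nabla h_j(x(t))^\top x'(t)=0$. Using the KKT identity $\nabla h_0(x(t))=\sum_{j=1}^l\lambda_j(t)\nabla h_j(x(t))$, which also holds on the interval, I obtain
\begin{equation}
f'(t)=\sum_{j=1}^l\lambda_j(t)\,\nabla h_j(x(t))^\top x'(t)=0\,.
\end{equation}
Thus $f$ is continuous on $[0,1]$, piecewise differentiable on the subdivision $\{a_i\}$, and has zero (sub)gradient there. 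Applying Lemma \ref{lem:mean.val} yields $h_0(w_0)=f(0)=f(1)=h_0(w_1)$, proving that $h_0$ is constant on $W$.

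The only delicate point is making sure that $\phi$ is differentiable on each analytic piece (rather than merely continuous): this is handled by invoking Lemma \ref{lem:semial.func.anal} componentwise and intersecting the finitely many partitions, after which the rest is just chain rule plus the KKT identity.
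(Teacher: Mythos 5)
Your proof is correct and follows essentially the same approach as the paper: semi-algebraic path, piecewise analyticity via Lemma \ref{lem:semial.func.anal}, chain rule to show zero derivative along the path, and Lemma \ref{lem:mean.val} to conclude. The only cosmetic difference is that the paper packages the computation through the Lagrangian $L(x,\lambda)=h_0(x)-\sum_j\lambda_j h_j(x)$, observing that $L$ coincides with $h_0$ and has identically vanishing gradient on $V(\bar h_{\KKT})$, whereas you differentiate $h_0\circ x$ directly and invoke the KKT and constraint identities to kill each term; these are algebraically the same computation. If anything, you are slightly more careful than the paper's prose on one point: you apply Lemma \ref{lem:semial.func.anal} componentwise to $\phi$ and intersect the exceptional sets so that $\phi$ itself is piecewise differentiable before invoking the chain rule, whereas the paper applies the lemma only to the scalar composite $L\circ\phi$ and then asserts zero subgradient, which tacitly also needs piecewise differentiability of $\phi$ for the chain rule to apply.
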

\begin{proof} 
Recall $\lambda:=(\lambda_1,\dots,\lambda_m)$.
Choose two arbitrary points $(x^{(0)},\lambda^{(0)})$, $(x^{(1)},\lambda^{(1)})$ in $W$. 
We claim that $h_0(x^{(0)}) = h_0(x^{(1)})$.
By assumption, there exists a continuous semi-algebraic mapping $\phi:[0,1]\to W$ defined by $\phi(\tau) = (x(\tau),\lambda(\tau))$ such that $\phi(0) = (x^{(0)},\lambda^{(0)})$ and $\phi(1) = (x^{(1)},\lambda^{(1)})$. 
We claim that $\tau\mapsto h_0(x(\tau))$ is constant on $[0,1]$.
The Lagrangian function
\begin{equation}\label{eq:Lagran.KKT}
    L(x,\lambda) := h_0(x)-\sum_{j=1}^m \lambda_j h_j (x)\,
\end{equation}
is equal to $h_0(x)$ on $V(\bar h_{\KKT})$, which contains $\phi([0,1])$. 
By Lemma \ref{lem:composit.semi-al}, the function $L\circ \phi$ is semi-algebraic.
Moreover, the function $L\circ \phi$ is continuous since $L$ and $\phi$ are continuous.
It implies that $L\circ \phi$ is a continuous piecewise-differentiable function thanks to Lemma \ref{lem:semial.func.anal}.
Note that the function
$L\circ \phi$
has zero subgradient on $[0,1]$.
From Lemma \ref{lem:mean.val}, it follows that $h_0(x (0) )=(L\circ \phi)(0)= (L\circ \phi)(1)= h_0(x (1) )$.
We now obtain $h_0(x^{(0)})$ = $h_0(x^{(1)})$ and hence $h_0$ is constant on $W$.
\end{proof}

Based on the Karush--Kuhn--Tucker conditions, we state in the following theorem the sums of squares-based representation with degree bound for a polynomial non-negative on algebraic varieties:
\begin{theorem}\label{theo:rep.KKT}
Let $h_0,h_1,\dots,h_l$ be polynomial in $\R_d[x]$  with $d\ge 2$.
Assume that $h_0$ is non-negative on $V(h)$ with $h=(h_1,\dots,h_l)$.
Set $\bar h:=(h_0,h)$, $\lambda:=(\lambda_1,\dots,\lambda_l)$ and
\begin{equation}\label{eq:def.w.KKT}
w:=\max\{d\times (c(n+l,d,n+l)-1),\frac{1}{2}\times b(n+l,d,l+n+1)+d\}\,,
\end{equation}
where $c(\cdot)$ and $b(\cdot)$ are defined as in \eqref{eq:bound.connected} and  \eqref{eq:bound.Krivine}, respectively.
Then the following statements hold:
\begin{enumerate}
\item The cardinality of $h_0(V(\bar h_{\KKT}))$ is at most $c(n+l,d+1,n+l)$.
\item There exists $\sigma\in \Sigma^2_w[x, \lambda]$ such that $h_0-\sigma$ vanishes on $V(\bar h_{\KKT})$.
\end{enumerate}
\end{theorem}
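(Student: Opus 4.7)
The strategy is to verify both claims by passing to the extended variety $V(\bar h_{\KKT}) \subset \R^{n+l}$ (with coordinates $(x,\lambda)$) and applying the preceding two tools in tandem: Lemma \ref{lem:constant.KKT} together with Lemma \ref{lem:num.connected} for part (1), and Lemma \ref{lem:quadra} for part (2).

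For part (1), I first do the degree bookkeeping: each $h_j$ lies in $\R_d[x,\lambda]$, while each entry of $\nabla h_0 - \sum_{j=1}^l \lambda_j \nabla h_j$ has total degree at most $\max\{d-1,\ 1+(d-1)\} = d$ in $(x,\lambda)$. Hence $V(\bar h_{\KKT})$ is cut out in $\R^{n+l}$ by $n+l$ polynomials of degree at most $d$, and Lemma \ref{lem:num.connected} (applied with $m=0$, number of variables $n+l$, and $n+l$ equations) bounds its number of semi-algebraically path connected components by $c(n+l,d,n+l)$, which in turn is at most the claimed $c(n+l,d+1,n+l)$ by monotonicity of $c$. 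Lemma \ref{lem:constant.KKT} then asserts that $h_0$ is constant on each such component (its $m$-many multipliers playing the role of the $l$-many $\lambda_j$ here), so $|h_0(V(\bar h_{\KKT}))|$ is bounded by the same quantity.

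For part (2), I apply Lemma \ref{lem:quadra} to $h_0$, viewed as an element of $\R_d[x,\lambda]$, on the variety $V(\bar h_{\KKT})\subset\R^{n+l}$. The two hypotheses of that lemma are satisfied: the first $l$ defining equations of $\bar h_{\KKT}$ force the projection of $V(\bar h_{\KKT})$ into $V(h)\times\R^l$, so $h_0\ge 0$ on $V(\bar h_{\KKT})$ by the standing assumption $h_0\ge 0$ on $V(h)$; and $|h_0(V(\bar h_{\KKT}))|<\infty$ is exactly part (1). Substituting $n\mapsto n+l$ and $l\mapsto n+l$ into the parameter formulas of Lemma \ref{lem:quadra} gives $u=\frac{1}{2} b(n+l,d,n+l+1)$ and delivers $\sigma \in \Sigma^2_w[x,\lambda]$ such that $h_0-\sigma$ vanishes on $V(\bar h_{\KKT})$, with $w = \max\{d(r-1),\ u+d\}$ and $r\le c(n+l,d,n+l)$; this bound for $w$ coincides with the expression declared in \eqref{eq:def.w.KKT}.

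I do not anticipate any genuine obstacle here: the argument is essentially a two-step mechanical composition of Lemmas \ref{lem:constant.KKT}, \ref{lem:num.connected}, and \ref{lem:quadra}. The only care needed is the degree count for the gradient portion of $\bar h_{\KKT}$ as a polynomial in the augmented variables $(x,\lambda)$, together with the correct identification of the parameters $(n,d,l)$ when instantiating each of those lemmas in the enlarged ambient space $\R^{n+l}$.
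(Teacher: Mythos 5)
Your proof is correct and follows essentially the same path as the paper's: bound the number of semi-algebraically path connected components of $V(\bar h_{\KKT})$ via Lemma \ref{lem:num.connected}, invoke Lemma \ref{lem:constant.KKT} to conclude $h_0$ takes finitely many values, and then apply Lemma \ref{lem:quadra} in the ambient space $\R^{n+l}$ with the substitutions $n\mapsto n+l$, $l\mapsto n+l$. Your explicit degree bookkeeping for the gradient block of $\bar h_{\KKT}$ and the observation that the component count is actually $c(n+l,d,n+l)\le c(n+l,d+1,n+l)$ (the theorem's stated cardinality bound is slightly looser than what the argument delivers) are both consistent with what the paper implicitly uses.
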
 
\begin{proof}
Using Lemma \ref{lem:num.connected}, we decompose $V(\bar h_{\KKT})$ into semi-algebraically path connected components:
$Z_1,\dots,Z_s$ with  
\begin{equation}\label{eq:bound.on.s.KKT}
s\le c(n+l,d,n+l)\,,
\end{equation}
since each entry of $\bar h_{\KKT}$ has degree at most $d\ge 2$.
Accordingly Lemma \ref{lem:constant.KKT} shows that $h_0$ is constant on each $Z_i$.
Thus $h_0(V(\bar h_{\KKT}))$ is finite.
Set $r=|h_0(V(\bar h_{\KKT}))|$.
From \eqref{eq:bound.on.s.KKT}, we get 
\begin{equation}\label{eq:ineq.KKT}
r\le s\le c(n+l,d,n+l)\,.
\end{equation}
Set 
\begin{equation}
u:=\frac{1}{2}\times b(n+l,d,l+n+1)\,.
\end{equation}
By using Lemma \ref{lem:quadra}, there exists $\sigma\in \Sigma^2_\xi[x,\lambda]$ with $\xi=\max\{d(r-1),u+d\}$ such that $h_0 - \sigma$ vanishes on $V(\bar h_{\KKT})$.
By \eqref{eq:ineq.KKT} and \eqref{eq:def.w.KKT}, we get $\xi\le w$, and hence $\sigma\in \Sigma^2_w[x,\lambda]$. 
\end{proof}
\begin{remark}
Regarding the representation in Theorem \ref{theo:rep.KKT}, it could also be interesting to stay in the $x$-space instead of lifting to the $(x,\lambda)$-space.
To do this, we need the projections of the Karush--Kuhn--Tucker varieties onto the $x$-space and handle the representations similar to the ones of Bucero and Murrain in \cite{bucero2013exact}.
Another open/difficult question is to get better (e.g., double exponential) bounds for the representation of polynomials.
\end{remark}
We state in the following theorem the degree bound for Nie's Nichtnegativstellensatz \cite{nie2013polynomial}, which is the sums of squares-based representation for a polynomial non-negative on zero-dimensional algebraic varieties:
\begin{theorem}\label{theo:rep.finite}
Let $h_0,h_1,\dots,h_l$ be polynomial in $\R_d[x]$  with $d\ge 2$.
Assume that $V(h)$ with $h=(h_1,\dots,h_l)$ has zero dimension and $h_0$ is non-negative on $V(h)$.
Set
\begin{equation}\label{eq:def.w.finite}
w:=\max\{d\times (c(n,d,l)-1),\frac{1}{2}\times b(n,d,l+1)+d\}\,,
\end{equation}
where $c(\cdot)$ and $b(\cdot)$ are defined as in \eqref{eq:bound.connected} and  \eqref{eq:bound.Krivine}, respectively.
Then the following statements hold:
\begin{enumerate}
\item The cardinality of $h_0(V(h))$ is at most $c(n,d,l)$.
\item There exists $\sigma\in \Sigma^2_w[x]$ such that $h_0-\sigma$ vanishes on $V(h)$.
\end{enumerate}
\end{theorem}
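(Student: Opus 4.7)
The plan is to prove the two statements in sequence, mirroring the structure used for Theorem \ref{theo:rep.KKT} but working directly in the $x$-space since $V(h)$ is already zero-dimensional (so we do not need to pass to the KKT variety).

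For statement (1), I would start by observing that a zero-dimensional real algebraic variety is finite, and its (semi-algebraically path) connected components are exactly its individual points. Since each $h_j$ lies in $\R_d[x]$ with $d \ge 2$, Lemma \ref{lem:num.connected} (applied with no inequality constraints, i.e., $m=0$) gives that the number of connected components of $V(h)$ is at most $c(n,d,l)$. Hence $|V(h)| \le c(n,d,l)$, and therefore $|h_0(V(h))| \le |V(h)| \le c(n,d,l)$ as claimed.

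For statement (2), I would invoke Lemma \ref{lem:quadra} directly with the data $(h_0, h_1, \dots, h_l)$, which applies since $h_0$ is non-negative on $V(h)$ and $h_0(V(h))$ is finite by statement (1). Setting $r := |h_0(V(h))|$ and $u := \tfrac{1}{2} \times b(n,d,l+1)$, the lemma produces $\sigma \in \Sigma^2_\xi[x]$ with $\xi = \max\{d(r-1), u+d\}$ such that $h_0 - \sigma$ vanishes on $V(h)$. Using the bound $r \le c(n,d,l)$ from statement (1) together with the definition of $w$ in \eqref{eq:def.w.finite}, we get $d(r-1) \le d(c(n,d,l)-1) \le w$ and $u+d \le w$, so $\xi \le w$ and consequently $\sigma \in \Sigma^2_w[x]$.

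I do not expect any serious obstacle here: the theorem is essentially a corollary of Lemma \ref{lem:quadra} (which supplies the quadratic/Lagrange-interpolation representation) combined with the connected-component count of Lemma \ref{lem:num.connected} (which bounds how many distinct values $h_0$ can take on the finite set $V(h)$). The only subtle point worth stating explicitly is why the connected-component count of Lemma \ref{lem:num.connected} gives a bound on $|V(h)|$ in the zero-dimensional case, namely that each component reduces to an isolated point.
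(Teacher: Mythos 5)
Your proof is correct and takes essentially the same route as the paper: both arguments invoke Lemma \ref{lem:num.connected} to bound the number of points of the zero-dimensional variety $V(h)$ by $c(n,d,l)$, and then apply Lemma \ref{lem:quadra} to produce the sum-of-squares representation with the stated degree bound $w$. Your explicit remark that the connected components of a zero-dimensional variety are exactly its isolated points is a small clarifying addition, but otherwise the reasoning matches the paper's proof step for step.
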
 
\begin{proof}
Since $V(h)$ has zero-dimension, $V(h)$ has a finite number $s$ of distinct points.
By Lemma \ref{lem:num.connected}, we get $s\le c(n,d,l)$ since each entry of $h$ has degree at most $d\ge 2$.
Then $h_0$ is constant on each $Z_i$ which yields $h_0(V(\bar h_{\KKT}))$ is finite.
Set $r=|h_0(V(h))|$.
It implies that $r\le s\le c(n,d,l)$.
Set $u:=\frac{1}{2}\times b(n,d,l+1)$.
By using Lemma \ref{lem:quadra}, there exists $\sigma\in \Sigma^2_\xi[x]$ with $\xi=\max\{d(r-1),u+d\}$ such that $h_0 - \sigma$ vanishes on $V(h)$.
It is not hard to prove that $\xi\le w$, and hence $\sigma\in \Sigma^2_w[x]$. 
\end{proof}

\subsection{Exact polynomial optimization in the generic case}

We recall some preliminaries  of the Moment-SOS relaxations originally developed by Lasserre in \cite{lasserre2001global}.
Given $d\in\N$, let $\N^n_d:=\{\alpha\in\N^n\,:\,\sum_{j=1}^n \alpha_j\le d\}$.
Given $d\in\N$, we denote by $v_d$ the vector of monomials in $x$ of degree at most $d$, i.e., $v_d=(x^\alpha)_{\alpha\in\N^n_d}$ with $x^\alpha:=x_1^{\alpha_1}\dots x_n^{\alpha_n}$.
For each $p\in\R_d[x]$, we write $p=c(p)^\top v_d=\sum_{\alpha\in\N^n_d}p_\alpha x^\alpha$, where $c(p)$ is denoted by the vector of coefficient of $p$, i.e., $c(p)=(p_\alpha)_{\alpha\in\N^n_d}$ with $p_\alpha\in\R$.
Given $A\in\R^{r\times r}$ being symmetric, we say that $A$ is positive semidefinite, denoted by $A\succeq 0$, if every eigenvalue of $A$ is non-negative.

Given $y=(y_\alpha)_{\alpha\in\N^n}\subset \R$, let $L_y:\R[x]\to\R$ be the Riesz linear functional defined by $L_y(p)=\sum_{\alpha\in\N^n} p_\alpha y_\alpha$ for every $p\in\R[x]$.
Given $d\in\N$, $p\in\R[x]$ and $y=(y_\alpha)_{\alpha\in\N^n}\subset \R$, let $M_d(y)$ be the moment matrix of order $d$ defined by $(y_{\alpha+\beta})_{\alpha,\beta\in\N^n_d}$.

The following lemma shows the connection between sums of squares and semidefinite programming (see, e.g., \cite[Proposition 2.1]{lasserre2015introduction}):
\begin{lemma}\label{lem:sdp.sos}
Let $\sigma\in\R[x]$ and $d\in\N$ such that $2d\ge\deg(\sigma)$.
Then $\sigma\in\Sigma^2[x]$ iff there exists $G\succeq 0$ such that $\sigma=v_d^\top Gv_d$.
\end{lemma}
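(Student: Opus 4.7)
The plan is to prove both implications by a direct computation using the Gram-matrix/factorization correspondence, which is the standard vectorial reformulation of sums of squares. Both directions rely on writing polynomials as $p = c(p)^\top v_d$ and exchanging between quadratic forms in the coefficients and quadratic forms in the monomial vector.

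For the implication $(\Rightarrow)$: assume $\sigma = \sum_{i=1}^N p_i^2$ for some polynomials $p_i \in \R[x]$. I would first argue that each $p_i$ has degree at most $d$. If some $p_i$ had degree $D > d$, then looking at the homogeneous degree-$2D$ component of $\sum_i p_i^2$ gives $\sum_i q_i^2$, where $q_i$ is the homogeneous top-degree-$D$ part of $p_i$ (with the convention $q_i = 0$ if $\deg(p_i) < D$). Since $\deg(\sigma) \le 2d < 2D$, this sum vanishes identically; evaluating at a generic real point forces every $q_i = 0$, contradicting the choice of $D$. Hence $\deg(p_i) \le d$, and each $p_i$ can be written as $p_i = c(p_i)^\top v_d$ with $c(p_i) \in \R^{|\N^n_d|}$. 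Setting $G := \sum_{i=1}^N c(p_i) c(p_i)^\top$ yields $\sigma = v_d^\top G v_d$, and $G \succeq 0$ as a sum of rank-one positive semidefinite matrices.

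For the implication $(\Leftarrow)$: assume $G \succeq 0$ admits the representation $\sigma = v_d^\top G v_d$. By the spectral theorem (or a Cholesky-type factorization), write $G = \sum_{i=1}^r u_i u_i^\top$ with $u_i \in \R^{|\N^n_d|}$. Define $p_i := u_i^\top v_d \in \R_d[x]$. Then
\begin{equation}
\sigma = v_d^\top G v_d = \sum_{i=1}^r v_d^\top u_i u_i^\top v_d = \sum_{i=1}^r p_i^2 \in \Sigma^2[x]\,,
\end{equation}
which concludes that direction.

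The main obstacle, such as it is, lies entirely in the degree-bound step of the forward direction: without it, one only obtains a Gram matrix $G$ of larger size than $|\N^n_d| \times |\N^n_d|$, and the correspondence with $v_d$ breaks down. The homogeneous-top-part argument above handles it cleanly in the polynomial ring $\R[x]$, which is an integral domain over $\R$. Everything else is a direct dualization between the coefficient vectors $c(p_i)$ and the outer products $c(p_i) c(p_i)^\top$, so no further substantive work is needed.
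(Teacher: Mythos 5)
The paper does not prove Lemma~\ref{lem:sdp.sos} itself; it simply cites \cite[Proposition 2.1]{lasserre2015introduction}. Your proof is the standard Gram-matrix argument found in that reference (and in essentially every treatment of the SOS--SDP correspondence), and it is correct: the factorization $G=\sum_i u_iu_i^\top$ for $G\succeq 0$ handles $(\Leftarrow)$, the rank-one outer products $\sum_i c(p_i)c(p_i)^\top$ handle $(\Rightarrow)$, and the top-degree homogeneous-part argument correctly establishes $\deg(p_i)\le d$. One small stylistic nit: in the degree-bound step, it would be cleaner to say that the identically vanishing sum $\sum_i q_i^2$ forces each $q_i$ to vanish at \emph{every} real point (since each summand is nonnegative), hence $q_i\equiv 0$; invoking a ``generic'' point is slightly misleading because a single point evaluation does not by itself kill a polynomial. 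This does not affect the validity of the argument.
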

Given $k\in\N$ and $h_0,h_1,\dots,h_l\in\R[x]$, consider the following primal-dual semidefinite programs associated with $\bar h=(h_0,h_1,\dots,h_l)$:
\begin{equation}\label{eq:mom.relax}
\begin{array}{rl}
\tau_k(\bar h):=\inf\limits_y& L_y(h_0)\\
\text{s.t} &M_k(y)\succeq 0\,,\\
&M_{k-r_t}(h_ty)=0\,,\,t=1,\dots,l\,,\,y_0=1\,,
\end{array}
\end{equation}
\begin{equation}\label{eq:sos.relax}
\begin{array}{rl}
\rho_k(\bar h):=\sup\limits_{\xi,G,u_t} & \xi\\
\text{s.t} & G\succeq 0\,,\\
&h_0-\xi=v_k^\top Gv_k+\sum_{t=1}^l h_tu_t^\top v_{2(k-r_t)}\,,\\
\end{array}
\end{equation}
where $r_t=\lceil \deg(h_t)/2\rceil$.
Using Lemma \ref{lem:sdp.sos}, we obtain 
\begin{equation}\label{eq:equi.sos}
\rho_k(\bar h)=\sup_{\xi\in\R}\{ \xi\,:\,h_0-\xi\in \Sigma_k[x]+I_k(h)[x]\}\,.
\end{equation}
Primal-dual semidefinite programs \eqref{eq:mom.relax}-\eqref{eq:sos.relax} is known as the Moment-SOS relaxations of order $k$ for problem \eqref{eq:pop}.

We state in the following lemma some recent results involving the Moment-SOS relaxations:
\begin{lemma}\label{lem:mom.sos}
Let $h_0,h_1,\dots,h_l\in\R_{d}[x]$. 
Let $h^\star$ be as in \eqref{eq:pop}  with $h=(h_1,\dots,h_l)$. 
Set $\bar h=(h_0,h)$.
Assume that $h^\star>-\infty$.
Then the following statements hold:
\begin{enumerate}
\item For every $k\in\N$, $\tau_k(\bar h)\le \tau_{k+1}(\bar h)$ and $\rho_k(\bar h)\le \rho_{k+1}(\bar h)$.
\item For every $k\in\N$, $\rho_k(\bar h)\le \tau_{k}(\bar h)\le h^\star$.
\item If there exists $q\in \Sigma_w[x]$ with $2w\ge d$ such that $h_0-h^\star-q$ vanishes on $V(h)$, then  $\rho_r(\bar h)=h^\star$ with $r=\frac{1}{2}\times b(n,2w,l+1)+d$, where  $b(\cdot)$ is defined as in   \eqref{eq:bound.Krivine}.
\end{enumerate}
\end{lemma}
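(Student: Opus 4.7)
For part 1, monotonicity is immediate on both sides: any feasible moment sequence at order $k+1$ restricts to a feasible one at order $k$ with identical objective $L_y(h_0)$, giving $\tau_k \le \tau_{k+1}$, while the nested inclusions $\Sigma^2_k[x] \subset \Sigma^2_{k+1}[x]$ and $I_k(h)[x] \subset I_{k+1}(h)[x]$ yield $\rho_k \le \rho_{k+1}$. For part 2, the plan is first to establish $\rho_k \le \tau_k$ by standard SDP weak duality: applying $L_y$ to the SOS identity $h_0 - \xi = v_k^\top G v_k + \sum_t h_t u_t^\top v_{2(k-r_t)}$ defining any feasible $(\xi, G, u_t)$, and using $G \succeq 0$, $M_k(y) \succeq 0$ together with the localizing constraints $M_{k-r_t}(h_t y) = 0$, yields $L_y(h_0) - \xi \ge 0$. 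For $\tau_k \le h^\star$, note that for each $x \in V(h)$ the Dirac moment sequence $y_\alpha := x^\alpha$ is feasible with $L_y(h_0) = h_0(x)$; taking the infimum over $V(h)$ gives $\tau_k \le h^\star$.

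For part 3, the upper bound $\rho_r \le h^\star$ already follows from part 2, so the task is to build a certificate witnessing $\rho_r \ge h^\star$ at the claimed order $r = \tfrac{1}{2}\, b(n, 2w, l+1) + d$. I would set $p := h_0 - h^\star - q$; by hypothesis $p$ vanishes on $V(h)$ and has $\deg p \le 2w$ (since $\deg h_0 \le d \le 2w$ and $\deg q \le 2w$), and also $\deg h_j \le d \le 2w$. Applying Lemma \ref{lem:pos2} with empty $g$ and common degree bound $2w$ furnishes an even exponent $s$ together with $\sigma_0 \in \Sigma^2_{r'}[x]$ and $\phi_0 \in I_{r'}(h)[x]$ satisfying $-p^s = \sigma_0 + \phi_0$, where $r' := \tfrac{1}{2}\, b(n, 2w, l+1)$. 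From this Krivine--Stengle certificate for $-p^s$ I plan to derive, for each $\xi < h^\star$ with $\epsilon := h^\star - \xi > 0$, an additive representation $p + \epsilon \in \Sigma^2_r[x] + I_r(h)[x]$ with $r = r' + d$: the governing algebraic identity is $\epsilon^s - p^s = (p + \epsilon)\, T_\epsilon$ with $T_\epsilon := \sum_{k=0}^{s-1} (-p)^k \epsilon^{s-1-k}$, so that $(p+\epsilon)\, T_\epsilon \equiv \epsilon^s + \sigma_0 \pmod{I_r(h)[x]}$, which is a strictly positive SOS polynomial, and a suitable normalization converts this multiplicative certificate into the desired additive one. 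Once $p + \epsilon \in \Sigma^2_r[x] + I_r(h)[x]$ is in hand, adding the SOS piece $q \in \Sigma^2_w[x] \subset \Sigma^2_r[x]$ gives $h_0 - \xi = q + (p + \epsilon) \in \Sigma^2_r[x] + I_r(h)[x]$, hence $\xi \le \rho_r(\bar h)$, and taking $\sup_{\xi < h^\star} \xi$ yields $\rho_r(\bar h) \ge h^\star$.

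The hard part will be precisely this passage from the Krivine--Stengle certificate of $-p^s$ to a Putinar-style certificate of $p + \epsilon$ inside the polynomial ring, with the degree bookkeeping controlled to exactly $+d$; this is where the technical content of part 3 is concentrated, and it is what dictates the specific order $r = \tfrac{1}{2}\, b(n, 2w, l+1) + d$ stated in the lemma.
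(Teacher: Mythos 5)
Your parts 1 and 2 are correct and coincide with what the paper asserts (and the paper merely declares them ``trivial''). For part 3 you correctly identify the starting move: set $p := h_0 - h^\star - q$ and apply Lemma~\ref{lem:pos2} (with $g$ empty, degree bound $2w$) to obtain $-p^s \in \Sigma^2_{r'}[x] + I_{r'}(h)[x]$ with $r' = \tfrac12\,b(n,2w,l+1)$. This matches the paper. However, the step you flag as ``the hard part'' is indeed a genuine gap, and the device you propose does not close it.

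Your plan is to use the telescoping identity $\epsilon^s - p^s = (p+\epsilon)\,T_\epsilon$ with $T_\epsilon = \sum_{k=0}^{s-1}(-p)^k\epsilon^{s-1-k}$, deduce $(p+\epsilon)\,T_\epsilon \equiv \epsilon^s + \sigma_0 \pmod{I_r(h)[x]}$, and then ``normalize'' this multiplicative certificate into an additive one $p+\epsilon \in \Sigma^2_r[x]+I_r(h)[x]$. There is no such normalization in general: having $f\cdot g$ be a strictly positive sum of squares modulo an ideal does not let you divide off $g$ and conclude $f$ is a sum of squares modulo the ideal, even when $g$ is a strictly positive SOS (e.g.\ the Motzkin polynomial $M$ satisfies $(x^2+y^2+1)M \in \Sigma^2$ but $M\notin\Sigma^2$); and here $T_\epsilon$ is not even an SOS, nor globally nonnegative. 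Moreover $\deg T_\epsilon \approx (s-1)\cdot 2w$ with $s$ of order $r'/d$, so even if some division trick existed, the degree bookkeeping would blow up far beyond the claimed $r = r' + d$.

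The paper avoids the multiplicative form entirely by Nie's algebraic trick: fix $c = \tfrac{1}{2s}$, observe that the univariate polynomial $1 + t + c\,t^{2s}$ is globally nonnegative and hence an SOS in $\Sigma^2_s[t]$, and write, with $u := h_0 - h^\star - q$ and $u^{2s} + \sigma \in I_{r-d}(h)[x]$ furnished by Lemma~\ref{lem:pos2},
\begin{equation*}
h_0 - h^\star + \varepsilon \;=\; q \;+\; \varepsilon\Bigl(1 + \tfrac{u}{\varepsilon} + c\bigl(\tfrac{u}{\varepsilon}\bigr)^{2s}\Bigr) \;-\; c\,\varepsilon^{1-2s}\bigl(u^{2s}+\sigma\bigr) \;+\; c\,\varepsilon^{1-2s}\sigma\,,
\end{equation*}
which is manifestly in $\Sigma^2_{r-d}[x]+I_{r-d}(h)[x]\subset\Sigma^2_r[x]+I_r(h)[x]$ for every $\varepsilon>0$. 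This is a direct additive decomposition with degrees controlled exactly as claimed. You should replace your telescoping step with this identity. You would also need to handle the case $V(h)=\emptyset$ separately (there $h^\star=\infty$, the polynomial $p$ is undefined, and one instead uses $-1\in\Sigma^2[x]+I(h)[x]$ from the empty-set Nichtnegativstellensatz to get $\rho_r(\bar h)=\infty$), as the paper does.
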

\begin{proof}
The proofs of the first two statements are trivial.
Let us use Nie's technique in \cite[Proof of Theorem 1.1]{nie2014optimality} to prove the third statement. (It allows us to avoid using real radical but still obtain exact semidefinite programs thanks to Krivine--Stengle's Nichtnegativstellens\"atze.)
Consider the following two cases:
\begin{itemize}
\item Case 1: $V(h)= \emptyset$. 
Then we get $h^\star=\infty$.
Set $s=\frac{1}{2}\times b(n,d,l+1)$.
Then Lemma \ref{lem:pos2} says that $-1 \in \Sigma^2_s[x]+I_s(h)[x]$.
For all $\xi\ge 0$, it holds that
\begin{equation}
h_0-\xi=(1+\frac{h_0}{4})^2-(\xi+(1-\frac{h_0}{4})^2)\in \Sigma_{s+d}[x]+I_{s+d}(h)[x]\,.
\end{equation}
Since $r\ge s+d$, it implies that for all $\xi\ge 0$, $h_0-\xi\in \Sigma_{r}[x]+I_{r}(h)[x]$, which yields that $\xi$ is a feasible solution for \eqref{eq:equi.sos} of the value $\rho_r(\bar h)$.
Thus we obtain that $\rho_r(\bar h)=\infty= h^\star$.
\item Case 2: $V(h)\ne \emptyset$. 
Then we get $h^\star<\infty$.
Set $u=h_0-h^\star-q$.
By assumption,we get $u\in\R_{2w}[x]$ and $u=0$ on $V(h)$. 
Set $s=2\lfloor (r-d)/(2w)\rfloor$.
From this, Lemma \ref{lem:pos2} says that there exist  $\sigma \in \Sigma_{r-d}^2[x]$ such that $u^{2s} + \sigma \in I_{r-d}(h)[x]$.
Let $c=\frac{1}{2s}$. 
Then it holds that $1+t+ct^{2s}\in\Sigma^2_s[t]$.
Thus for all $\varepsilon>0$, we have
\begin{equation}
\begin{array}{rl}
h_0-h^\star+\varepsilon&=q + \varepsilon(1+\frac{u}{\varepsilon}+c\left(\frac{u}{\varepsilon}\right)^{2s})-c\varepsilon^{1-2s}(u^{2s} + \sigma ) +c\varepsilon^{1-2s}\sigma\\
&\in \Sigma^2_{r-d}[x]+I_{r-d}(h)[x]\subset \Sigma^2_{r}[x]+I_{r}(h)[x]\,.
\end{array}
\end{equation}
Then we for all $\varepsilon>0$, $h^\star-\varepsilon$ is a feasible solution of \eqref{eq:equi.sos} of the value $\rho_r(\bar h)$.
It gives $\rho_r(\bar h)\ge h^\star-\varepsilon$, for all $\varepsilon>0$, and, in consequence, we get $\rho_r(\bar h)\ge h^\star$.
Using the second statement, we obtain that $\rho_r(\bar h)= h^\star$, yielding the third statement.
\end{itemize}
\end{proof}

We apply Theorem \ref{theo:rep.KKT} for polynomial optimization as follows:
\begin{theorem}\label{theo:pop.KKT}
Let $h_0,h_1,\dots,h_l\in\R_d[x]$. 
Let $h^\star$ be as in problem \eqref{eq:pop} with $h=(h_1,\dots,h_l)$.
Assume that problem \eqref{eq:pop} has a global minimizer at which the Karush--Kuhn--Tucker conditions hold for this problem.
Let $w$ be as in \eqref{eq:def.w.KKT}.
Set 
\begin{equation}\label{eq:r.formula}
r:=\frac{1}{2}\times b(n+l,2w,l+n+1)+d\,.
\end{equation}
Then $\rho_r(h_0,\bar h_{\KKT})=h^\star$, where $\bar h:=(h_0,h)$ and $b(\cdot)$ is defined as in  \eqref{eq:bound.Krivine}.
\end{theorem}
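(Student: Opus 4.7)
\smallskip
\noindent
\textbf{Proof plan.} The plan is to recast the statement as a standard finite-convergence result for Lasserre's hierarchy applied to the lifted polynomial optimization problem
\begin{equation*}
\inf\bigl\{h_0(x)\,:\,(x,\lambda)\in V(\bar h_{\KKT})\bigr\}
\end{equation*}
in the variables $(x,\lambda)\in\R^{n+l}$, and to invoke the third statement of Lemma \ref{lem:mom.sos} together with Theorem \ref{theo:rep.KKT}.

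First, I would verify that the optimal value of the lifted problem equals $h^\star$. For any $(x,\lambda)\in V(\bar h_{\KKT})$ the first $l$ coordinates of $\bar h_{\KKT}$ force $h(x)=0$, hence $x\in V(h)$ and $h_0(x)\ge h^\star$. Conversely, the assumption that \eqref{eq:pop} admits a global minimizer $x^\star$ satisfying the Karush--Kuhn--Tucker conditions furnishes multipliers $\lambda^\star\in\R^l$ such that $(x^\star,\lambda^\star)\in V(\bar h_{\KKT})$, giving $h_0(x^\star)=h^\star$. Hence $h^\star$ is indeed attained on $V(\bar h_{\KKT})$.

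Second, I would apply Theorem \ref{theo:rep.KKT} not to $h_0$ but to the shifted polynomial $\tilde h_0 := h_0 - h^\star$, which still lies in $\R_d[x]$ and is non-negative on $V(h)$. Since $\nabla\tilde h_0=\nabla h_0$, the associated KKT vector coincides with $\bar h_{\KKT}$. Theorem \ref{theo:rep.KKT} then delivers a sums of squares $\sigma\in\Sigma^2_w[x,\lambda]$, with $w$ as in \eqref{eq:def.w.KKT}, such that $h_0-h^\star-\sigma$ vanishes on $V(\bar h_{\KKT})$.

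Third, I would plug this representation into Lemma \ref{lem:mom.sos}.3 applied to the lifted problem. The ambient number of variables becomes $n+l$, the number of defining equations is $n+l$ (namely $l$ equations $h_j=0$ plus $n$ equations coming from $\nabla h_0-\sum_j\lambda_j\nabla h_j=0$), and each entry of $\bar h_{\KKT}$ has total degree at most $d$ in $(x,\lambda)$, the term $\lambda_j\,\partial h_j/\partial x_t$ contributing degree $1+(d-1)=d$. The second term in the maximum defining $w$ ensures $w\ge d$, so the hypothesis $2w\ge d$ of Lemma \ref{lem:mom.sos}.3 is satisfied. Substituting $(n,l,d)\leftarrow(n+l,\,n+l,\,d)$ into the formula $\tfrac{1}{2}\,b(n,2w,l+1)+d$ produces exactly $r=\tfrac{1}{2}\,b(n+l,2w,l+n+1)+d$ as in \eqref{eq:r.formula}, and Lemma \ref{lem:mom.sos}.3 concludes $\rho_r(h_0,\bar h_{\KKT})=h^\star$.

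The only genuine care needed is the degree and parameter bookkeeping in the third step, together with the observation that shifting the objective by a constant does not change the KKT variety; the mathematical content is entirely carried by Theorem \ref{theo:rep.KKT}, so I do not anticipate any substantive obstacle.
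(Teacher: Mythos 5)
Your proposal is correct and follows essentially the same route as the paper: lift to $V(\bar h_{\KKT})$ where the optimum is attained, apply Theorem \ref{theo:rep.KKT} to $h_0-h^\star$ (which the paper leaves implicit, while you helpfully note that shifting by a constant does not alter the KKT variety), and then invoke Lemma \ref{lem:mom.sos}.3 with the parameters $(n+l,\,n+l,\,d)$. The only difference is that you spell out the degree and parameter bookkeeping — the degree bound $d$ on the entries of $\bar h_{\KKT}$, the count of $n+l$ defining equations, and the check $2w\ge d$ — which the paper passes over in silence.
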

\begin{proof}
By assumption, there exists $(x^\star,\lambda^\star)\in V(\bar h_{\KKT})$ such that $x^\star$ is a global minimizer of \eqref{eq:pop}.
It implies that
\begin{equation}
\begin{array}{rl}
h^\star:=\min\limits_{x,\lambda}& h_0(x)\\
\text{s.t.}& (x,\lambda)\in V(\bar h_{\KKT})\,,
\end{array}
\end{equation}
By assumption, Theorem \ref{theo:rep.KKT} yields that there exists $q\in \Sigma^2_w(g)[x,\lambda]$ such that $h_0-h^\star-q$ vanishes on $V(\bar h_{\KKT})$.
Applying the third statement of Lemma \ref{lem:mom.sos}, we obtain the conclusion.
\end{proof}
\begin{remark}\label{rem:not.attained}
Let $h=(h_1,\dots,h_l)$ with $h_j\in\R[x]$ and let $h^\star$ be as in \eqref{eq:pop}.
Assume that $h^\star$ is finite but is not attained i.e., $h_0(x)-h^\star>0$ for all $x\in V(h)$. (For instance, we can take (i) $h_0=x_1$ and $h=(x_1x_2^2-1)$ or (ii) $h_0=(x_1x_2-1)^2 + x_1^2$ and $h=(0)$.)
By Theorem \ref{theo:rep.KKT}, the set $h_0(V(\bar h_{\KKT}))-h^\star$ (with $\bar h:=(h_0,h)$) has a finite number of values but does not have zero value.
It is because of 
\begin{equation}
0\notin h_0(V(h))-h^\star\supset h_0(V(\bar h_{\KKT}))-h^\star\,.
\end{equation}
It implies that $\inf (h_0(V(\bar h_{\KKT}))-h^\star)=\delta>0$, so that $\inf h_0(V(\bar h_{\KKT}))=h^\star+\delta$.
Note that $\delta=\infty$ iff $h_0(V(\bar h_{\KKT}))=\emptyset$.
Thus we obtain $\rho_r(h_0,\bar h_{\KKT})=h^\star+\delta>h^\star$, where $w$ is as in \eqref{eq:def.w.KKT} and $r$ is as in \eqref{eq:r.formula}.
To address this attainability issue, see Remark \ref{rem:attain}.
\end{remark}
We present the application of Theorem \ref{theo:rep.finite} to polynomial optimization:
\begin{theorem}\label{theo:pop.finite}
Let $h_0,h_1,\dots,h_l\in\R_d[x]$. 
Let $h^\star$ be as in problem \eqref{eq:pop} with $h=(h_1,\dots,h_l)$.
Assume that $V(h)$ has zero-dimension.
Let $w$ be as in \eqref{eq:def.w.finite}.
Set 
\begin{equation}\label{eq:r.formula.finite}
r:=\frac{1}{2}\times b(n,2w,l+1)+d\,.
\end{equation}
Then $\rho_r(h_0,h)=h^\star$, where $b(\cdot)$ is defined as in  \eqref{eq:bound.Krivine}.
\end{theorem}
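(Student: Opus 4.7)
The plan is to mirror the proof of Theorem \ref{theo:pop.KKT} almost verbatim, replacing the use of Theorem \ref{theo:rep.KKT} by Theorem \ref{theo:rep.finite}, the analogous representation result for zero-dimensional varieties. The crucial simplification here is that no lifting to $(x,\lambda)$ via the Karush--Kuhn--Tucker variety is needed: the hypothesis $\dim V(h)=0$ already allows us to apply Theorem \ref{theo:rep.finite} directly on $V(h)$ in the original $x$-space, so the ambient dimension in the bound $b(\cdot)$ drops from $n+l$ to $n$, matching \eqref{eq:def.w.finite}.

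First I would observe that since $V(h)$ is zero-dimensional it is a finite set, so if it is nonempty the minimum $h^\star$ is attained at some point of $V(h)$, and the shifted polynomial $h_0 - h^\star$, still of degree at most $d$, is nonnegative on $V(h)$. Applying Theorem \ref{theo:rep.finite} to $h_0 - h^\star$ yields $\sigma \in \Sigma^2_w[x]$, with $w$ as in \eqref{eq:def.w.finite}, such that $(h_0 - h^\star) - \sigma$ vanishes on $V(h)$. The definition of $w$ in \eqref{eq:def.w.finite} forces $w \ge d$, so the condition $2w \ge d$ required by the third statement of Lemma \ref{lem:mom.sos} is met, and taking $q := \sigma$ in that statement delivers $\rho_r(h_0,h) = h^\star$ with $r$ as in \eqref{eq:r.formula.finite}. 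Combined with the bound $\rho_r(h_0,h) \le h^\star$ from the second statement of the same lemma, this yields the equality.

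The only edge case is $V(h) = \emptyset$, which gives $h^\star = +\infty$; this is handled directly by Case~1 inside the proof of the third statement of Lemma \ref{lem:mom.sos}, so no separate argument is needed. I do not anticipate any real obstacle: the whole proof reduces to a one-line application of Theorem \ref{theo:rep.finite} followed by one of Lemma \ref{lem:mom.sos}, with only routine bookkeeping to check that the shifted polynomial $h_0 - h^\star$ does not exceed degree $d$ and that the constants $w$ and $r$ line up with \eqref{eq:def.w.finite} and \eqref{eq:r.formula.finite}.
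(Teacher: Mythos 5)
Your proof is correct and takes essentially the same route as the paper's: apply Theorem \ref{theo:rep.finite} to the shifted polynomial $h_0-h^\star$ (which is nonnegative on the zero-dimensional variety $V(h)$) to produce $q\in\Sigma^2_w[x]$ with $h_0-h^\star-q$ vanishing on $V(h)$, then invoke the third statement of Lemma \ref{lem:mom.sos} with the stated $w$ and $r$. Your explicit remark that $h^\star$ is attained on a nonempty finite $V(h)$ and that the case $V(h)=\emptyset$ is absorbed into Case~1 of Lemma \ref{lem:mom.sos} is a small extra bit of care that the paper's two-line proof leaves implicit.
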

\begin{proof}
By assumption, Theorem \ref{theo:rep.finite} yields that there exists $q\in \Sigma^2_w[x]$ such that $h_0-h^\star-q$ vanishes on $V(h)$.
Applying the third statement of Lemma \ref{lem:mom.sos}, we obtain the conclusion.
\end{proof}

\section{Decomposition of singular loci}
\label{sec:decomp.sing}
The following algorithm allows us to obtain the zero-dimensional and positive-dimensional subvarieties containing singular points of a real algebraic variety:
\begin{algorithm}\label{alg:reduce.dim.sing} 
Decomposition of singular loci.
\begin{itemize}
\item Input: Algebraic variety $V$ in $\R^n$  defined by polynomials in $\R[x]$.
\item Output: Sets $A$ and $B$ of subvarieties of $V$.
\end{itemize}
\begin{enumerate}
\item Set $U:=V$, $A:=\emptyset$  and $B:=\emptyset$.
\item Let $U_1,\dots,U_r$ be the irreducible components of $U$.
\item Set $A:=A\cup\{U_1,\dots,U_r\}$.
\item For $j=1,\dots,r$, do:
\begin{enumerate}
\item If $U_j$ is zero-dimensional, then set $A:=A\backslash \{U_j\}$ and $B:=B\cup\{U_j\}$.
\item Otherwise, let $U_j^{\sing}$ be the singular locus of $U_j$, set $U:=U_j^{\sing}$ and run again Steps 2, 3, 4.
\end{enumerate}
\end{enumerate}
\end{algorithm}
\begin{remark}
In order to verify if $U_j$ is zero-dimensional as in Step 4 (a) of Algorithm \ref{alg:reduce.dim.sing}, we refer the readers to the work of Lairez and Safey El Din in \cite{lairez2021computing}.
Moreover, their method also allows us to compute the dimension of a general real algebraic variety.
\end{remark}
\begin{remark}
In Step 4 (b) of Algorithm \ref{alg:reduce.dim.sing}, we implicitly compute generators $h$ of the real radical $I(U_j)$ to get the singular locus $U_j^{\sing}$ as defined by $(h,m_{n-d}(h))$ (see the first statement of Lemma \ref{lem:property.sing}).
In particular, if $U_j^{\sing}$ is defined by $(h,m_{n-d}(h))$ for some vector of polynomials $h$ defining $U_j$ and having multiplicities (e.g., $h=(q^2)$  for some $q\in\R[x]$), then we have $U_j^{\sing}=U_j$, and Algorithm \ref{alg:reduce.dim.sing} loops forever.
Other approaches based on deflation techniques and the so-called Thom--Boardman stratification could avoid this issue. 
They allow us to apply directly to the equations and derivatives (see, e.g., the work of Hauenstein and Wampler \cite{hauenstein2013isosingular}).
These methods are probably not very efficient but might give better bounds for polynomials defining singular loci than relying on the computation of real radical based on Becker--Neuhaus' method in \cite{becker1993computation,neuhaus1998computation}.
\end{remark}
The following lemma provides some properties of the output of Algorithm \ref{alg:reduce.dim.sing}.
\begin{lemma}\label{lem:well.alg}
Let $A$ and $B$ be the output of Algorithm \ref{alg:reduce.dim.sing}.
The following statements hold:
\begin{enumerate}
\item  $A$ and $B$ are the sets of finite numbers of algebraic varieties in $\R^n$ defined by polynomials in $\R[x]$.
\item Each variety in $A$ is irreducible and  positive-dimensional.
\item Each variety in $B$ is irreducible and zero-dimensional.
\item \if{$A\cap B=\emptyset$ and}\fi $V=\bigcup\limits_{W\in A\cup B}W$.
\end{enumerate}
\end{lemma}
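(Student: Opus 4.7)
The plan is to establish the four statements jointly by analyzing the recursion tree of Algorithm \ref{alg:reduce.dim.sing}, using induction on the dimension of the input variety to guarantee termination.

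First I would argue termination together with finiteness (statement 1). Every recursive invocation in Step 4(b) is performed on the singular locus $U_j^{\sing}$, which by the second part of Lemma \ref{lem:property.sing} has dimension strictly smaller than $U_j$. Hence the depth of the recursion is bounded by $\dim(V)$. At every node of the recursion, the irreducible decomposition in Step 2 yields only finitely many components thanks to Lemma \ref{lem:decomp.irr}, so the total number of varieties ever inserted into $A \cup B$ is finite. To see that every such variety is defined by polynomials in $\R[x]$, I would observe that the passage $U_j \mapsto U_j^{\sing}$ is realized by augmenting the generators $h$ of $I(U_j)$ (computable in $\R[x]$ by Becker--Neuhaus' method cited in the preliminaries) with the minors $m_{n-d}(h)$, using the first part of Lemma \ref{lem:property.sing}; irreducible decomposition (e.g., by the Gianni--Trager--Zacharias algorithm) likewise preserves the base field $\R[x]$.

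Statements 2 and 3 follow directly from bookkeeping: in Step 3 only irreducible components are inserted into $A$, and Step 4(a) removes from $A$ precisely those components that turn out to be zero-dimensional, transferring them to $B$. Consequently $A$ retains only positive-dimensional irreducible varieties while $B$ contains only zero-dimensional ones.

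For statement 4, the inclusion $\bigcup_{W \in A \cup B} W \subset V$ holds by an immediate induction on the recursion depth: every variety appended to $A \cup B$ is either an irreducible component of $V$ or an irreducible component of the singular locus of some previously-added subvariety of $V$, and is therefore contained in $V$. The reverse inclusion $V \subset \bigcup_{W \in A \cup B} W$ is already witnessed by the outermost call: Steps 2--3 give $V = U_1 \cup \dots \cup U_r$, and in Step 4 each $U_j$ is either kept in $A$ or moved to $B$, so each $U_j$ belongs to $A \cup B$. The main potential obstacle is making the termination argument rigorous in the presence of nested calls, but the strict dimension drop combined with Lemma \ref{lem:decomp.irr} makes a clean induction on $\dim(V)$ possible.
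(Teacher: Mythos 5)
Your proof is correct and follows essentially the same route as the paper's: termination via the strict dimension drop of the singular locus (Lemma \ref{lem:property.sing}), finiteness of each irreducible decomposition (Lemma \ref{lem:decomp.irr}), definability over $\R[x]$ via the minors construction, and bookkeeping of Steps 3--4 for the remaining statements. The only cosmetic difference is that you phrase termination as a bounded-depth, bounded-branching recursion tree while the paper argues by contradiction with an infinite descending chain of dimensions; both hinge on the same two lemmas.
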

\begin{proof}
Let us prove the first statement.
By Lemma \ref{lem:decomp.irr} and the first statement of Lemma \ref{lem:property.sing}, $U_j$ and $U_j^{\sing}$ in Step 4 (b) of Algorithm \ref{alg:reduce.dim.sing} are algebraic varieties in $\R^n$ defined by polynomials in $\R[x]$, then so are the elements of $A$ and $B$.
To prove that $A$ and $B$ are finite, we only need to show that Algorithm \ref{alg:reduce.dim.sing} terminates after a finite number of steps.
Assume  by contradiction that Algorithm \ref{alg:reduce.dim.sing} never terminates.
Lemma \ref{lem:decomp.irr} says that the number $r$ in Step 2 of Algorithm \ref{alg:reduce.dim.sing} is finite.
From this, Step 4 (b) produces an infinite sequence of irreducible varieties $(W_t)_{t\in\N}$ such that $W_0$ is an irreducible component of $V$ and
$W_{t+1}$ is an irreducible component of the singular locus of $W_{t}$.
It is impossible since the second statement of Lemma \ref{lem:property.sing} gives
$\dim(V)\ge\dim(W_{t})> \dim(W_{t+1})\ge 0$ for all $t\in\N$.
The last three statements are due to Step 1 and Step 4 (a) of Algorithm \ref{alg:reduce.dim.sing}.
\end{proof}

We now give the following example, which is based on \cite[Example 18]{mai2022exact}:
\begin{example}\label{exam:union.2.rays}
Let $n=5$ and $V=V(h)$ with $h=(x_{1}^3 - x_{4}^2,(x_2-x_3)^3-x_5^2,x_1(x_2-x_3))$ be as the input of Algorithm \ref{alg:reduce.dim.sing}.
In Step 2 of  Algorithm \ref{alg:reduce.dim.sing}, we decompose $V$ into irreducible components: $U_j=V(h^{(j)})$, $j=1,2$, where $h^{(1)}=(x_{5},x_{2} - x_{3},x_{1}^3 - x_{4}^2)$ and $h^{(2)}=(x_{4}, (x_{2}- x_{3})^3 - x_{5}^2, x_{1})$. 
(Both $U_1$ and $U_2$ have dimension two.)
Let $U_j^{\sing}$ be the singular locus of $U_j$, $j=1,2$.
Using the first statement of Lemma \ref{lem:property.sing}, we obtain the vector of polynomials $h^{(1)}_{\sing}=(x_{1}, x_{5}, x_{4}, x_{2} - x_{3})$ defining $U_1^{\sing}$. 
It is clear that $U_1^{\sing}$ is an irreducible variety of dimension one and the singular locus of $U_1^{\sing}$ is empty.
Similarly, we obtain the same vector of polynomials $h^{(2)}_{\sing}=(x_{1}, x_{5}, x_{4}, x_{2} - x_{3})$ defining $U_2^{\sing}$, so we get $U_2^{\sing}=U_1^{\sing}$.
Step 4 (a) of Algorithm \ref{alg:reduce.dim.sing}  yields $A=\{U_1,U_2,U_1^{\sing}\}$ and $B=\{\emptyset\}$.
\end{example}

\section{Nichtnegativstellens\"atze on singular loci}
\label{sec:represent.sing}
We provide in the following theorem the sums of squares-based representations on subvarieties returned by Algorithm \ref{alg:reduce.dim.sing}:
\begin{theorem}\label{theo:rep.nonneg}
Let $q_0,h_1,\dots,h_l$ in $\R[x]$. 
Assume that $q_0$ is non-negative on $V(h)$ with $h:=(h_1,\dots,h_l)$.
Let $A$ and $B$ be the output of Algorithm \ref{alg:reduce.dim.sing} with input $V=V(h)$.
Let $c(\cdot)$ and $b(\cdot)$ be as in \eqref{eq:bound.connected} and  \eqref{eq:bound.Krivine}, respectively.
Then the following statements hold:
\begin{enumerate}
\item For each variety $U$ in $A$ defined by $q=(q_1,\dots,q_r)$ with $q_j\in\R[x]$, then
\begin{enumerate}
\item the cardinality of $q_0(V(\bar q_{\KKT}))$ with $\bar q=(q_0,q)$ is at most 
\begin{equation}\label{eq:def.w.KKT2}
c(n+r,d+1,n+r)\,,
\end{equation}
where $d:=\max\{\deg(q_j)\,:\,j=0,\dots,r\}$;
\item  there exists $\sigma\in \Sigma_w[x,\lambda]$ with $\lambda=(\lambda_1,\dots,\lambda_l)$ and
\begin{equation}\label{eq:def.w.FJ2}
w:=\max\{d\times (c(n+r,d,n+r)-1),\frac{1}{2}\times b(n+r,d,r+n+1)+d\}
\end{equation}
 such that $q_0-\sigma$ vanishes on $V(\bar q_{\KKT})$.
\end{enumerate}

\item For each variety $U$ in $B$ defined by $q=(q_1,\dots,q_r)$ with $q_j \in\R[x]$, then 
\begin{enumerate}
\item the cardinality of $q_0(V(q))$ is at most $c(n,d,r)$,
where $d:=\max\{\deg(q_j)\,:\,j=0,\dots,r\}$;
\item  there exists $\sigma\in \Sigma_w[x]$ with
\begin{equation}
w:=\max\{d\times (c(n,d,r)-1),\frac{1}{2}\times b(n,d,r+1)+d\}
\end{equation}
such that $q_0-\sigma$ vanishes on $V(q)$.
\end{enumerate}
\end{enumerate}
\end{theorem}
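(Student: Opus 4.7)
The plan is to reduce both statements to the Nichtnegativstellens\"atze already established in Theorem \ref{theo:rep.KKT} (for positive-dimensional components) and Theorem \ref{theo:rep.finite} (for zero-dimensional components). The set-up of Algorithm \ref{alg:reduce.dim.sing} has been engineered precisely so that each element of $A \cup B$ is an irreducible subvariety of $V(h)$, so all the work has in fact already been done; what remains is essentially bookkeeping of parameters.

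First, I would verify the nonnegativity hypothesis needed to invoke the two background theorems. By the fourth statement of Lemma \ref{lem:well.alg}, $V(h) = \bigcup_{W\in A\cup B} W$, so every $U$ in $A \cup B$ is a subset of $V(h)$. Since $q_0$ is non-negative on $V(h)$ by assumption, it is non-negative on each such $U$. In particular, if $U$ is defined by $q=(q_1,\dots,q_r)$, then $q_0 \geq 0$ on $V(q)=U$, which is exactly the input hypothesis required by Theorems \ref{theo:rep.KKT} and \ref{theo:rep.finite}.

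For the first statement, I would fix $U \in A$, set $d:=\max\{\deg(q_j):j=0,\dots,r\}$, and apply Theorem \ref{theo:rep.KKT} directly with the substitution $(h_0,h_1,\dots,h_l)\leftarrow (q_0,q_1,\dots,q_r)$, so that $l$ becomes $r$. The bound $c(n+r,d+1,n+r)$ in \eqref{eq:def.w.KKT2} and the degree bound in \eqref{eq:def.w.FJ2} are then literally the two conclusions of Theorem \ref{theo:rep.KKT} after relabelling $n+l=n+r$. For the second statement, I would fix $U \in B$, which by Lemma \ref{lem:well.alg} is zero-dimensional, and apply Theorem \ref{theo:rep.finite} in the same mechanical way: statements 2(a) and 2(b) are its two conclusions after the relabelling. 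No new analytic input is required; the KKT lift and the resulting sums-of-squares representation are already built into the quoted theorems.

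The only minor technical point to dispatch is the hypothesis $d \geq 2$ appearing in Theorems \ref{theo:rep.KKT} and \ref{theo:rep.finite}. If the natural degree $d$ of the $q_j$'s is $0$ or $1$, I would simply replace $d$ by $\max\{2,d\}$ when invoking the background theorems; since the functions $c(\cdot)$ and $b(\cdot)$ are monotone in their degree arguments, every bound claimed in Theorem \ref{theo:rep.nonneg} remains valid (and can only become weaker). I do not see any genuine obstacle beyond this: the hard work of locating the real zeros of $q_0$ on $U$ has been carried out in Lemma \ref{lem:constant.KKT} for the positive-dimensional case and in the zero-dimensional case is trivial, while the quantitative passage from a finite image of $q_0$ to an SOS representative on the variety is performed by Lemma \ref{lem:quadra}. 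Thus the proof is a two-line appeal to the existing theorems, one for each of $A$ and $B$.
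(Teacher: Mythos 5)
Your proposal is correct and follows exactly the same route as the paper's proof: observe that each $U\in A\cup B$ sits inside $V(h)$ (so $q_0$ remains non-negative there), that each $U\in B$ is zero-dimensional by Lemma \ref{lem:well.alg}, and then invoke Theorem \ref{theo:rep.KKT} for the components in $A$ and Theorem \ref{theo:rep.finite} for those in $B$, with $l$ relabelled as $r$. Your remark about the $d\ge 2$ hypothesis is a legitimate (if minor) refinement that the paper's proof leaves tacit.
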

\begin{proof}
To prove the first and second statements, we apply Theorems \ref{theo:rep.KKT} and \ref{theo:rep.finite}, respectively.
Note that for each $U$ in $A$, $p_0$ is non-negative on $U\cap \R^n$ since $q_0$ is non-negative on  $V\cap \R^n\supset U\cap \R^n$.
Moreover, the second statement of Lemma \ref{lem:well.alg} yields that each $U$ in $B$ has zero-dimension.
\end{proof}
\begin{example}\label{exam:rep}
We use the same notation as in Example \ref{exam:union.2.rays}. 
Let $q_0=x_1+x_2-x_3$. 
Then $q_0$ is non-negative on $V(h)$.
Assume by contradiction that the non-strict extension of Schm\"udgen's Positivstellensatz is applicable to $q_0$ on $V(h)$, i.e., $q_0\in \Sigma^2_d[x]+I_d(h)[x]$ for some $d\in\N$. 
Letting $x_2=x_3=x_4=x_5=0$, we get $x_1=\sigma+x_1^3\psi$ for some $\sigma\in\Sigma^2[x_1]$ and $\psi\in \R[x_1]$.
It implies that $x_1$ divides $\sigma$ so that $\sigma=x_1^2\sigma_1$ for some $\sigma_1\in\Sigma^2[x_1]$.
From this, $x_1=x_1^2\sigma_1+x_1^3\psi$ implies that $1=x_1(\sigma_1+x_1\psi)$, which is impossible when we set $x_1=0$.
It is not hard to check that Theorem \ref{theo:rep.nonneg}.2 (b) is applicable to this example.
Indeed, letting $\lambda:=(\lambda_1,\lambda_2,\lambda_3,\lambda_4)$ and $\sigma:=0$ implies that $\sigma\in \Sigma[x,\lambda]$ and $q_0-\sigma$ vanishes on $U_1^{\sing}=V(h^{(1)}_{\sing})\in A$.
Thus $q_0-\sigma$ vanishes on $V(\bar q_{\KKT})\subset V(h^{(1)}_{\sing})\times \R^{4}$ with $\bar q=(q_0,h^{(1)}_{\sing})$.
\end{example}
\if{
\begin{remark}
Scheiderer's local-global principle in \cite{scheiderer2003sums} states that
a polynomial $f$ is a sum of squares on a real variety if and only if $f$ is a sum of squares in the completed local ring at each of its real zeros 
\end{remark}
}\fi
\section{Higher-order optimality conditions}
\label{sec:higher-order.opt}
We characterize a local minimizer for problem \eqref{eq:pop} with first-order optimality conditions in the following lemma:
\begin{lemma}\label{lem:pos.minimizer}
Let $V$ be an algebraic variety in $\R^n$.
Let $h_1,\dots,h_l\in \R[x]$ be the generators of $I(V)$.
Let $h_0$ be a polynomial in $\R[x]$ such that problem \eqref{eq:pop} with $h=(h_1,\dots,h_l)$ has a local minimizer $x^\star$.
Then one of the following two cases occurs:
\begin{enumerate}
\item The Karush--Kuhn--Tucker conditions hold for problem \eqref{eq:pop} at $x^\star$.
\item The point $x^\star$ is in the singular locus of $V$.
\end{enumerate}
\end{lemma}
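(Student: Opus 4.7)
The proof is a clean dichotomy based on whether $x^\star$ is a regular or a singular point of $V$, and most of the heavy lifting has already been done in Lemma~\ref{lem:KKT}. The plan is as follows.

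First I would recall that, by definition, $V = V^{\reg} \cup V^{\sing}$, so $x^\star$ lies in one of these two sets. If $x^\star \in V^{\sing}$, we are immediately in case~2 and there is nothing to prove. Hence I may assume $x^\star \in V^{\reg}$, which by the definition of the regular locus means that the Jacobian $J(h)(x^\star)$ has rank exactly $n - d$, where $d := \dim(V)$. Since $h_1,\dots,h_l$ are assumed to be generators of $I(V)$, the hypotheses of Lemma~\ref{lem:KKT} are satisfied (in the case $d>0$), and applying that lemma directly yields multipliers $\lambda_1^\star,\dots,\lambda_l^\star$ such that the Karush--Kuhn--Tucker conditions \eqref{eq:KKT} hold at $x^\star$. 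This puts us in case~1.

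The only thing not literally covered by Lemma~\ref{lem:KKT} is the degenerate case $d = 0$, which I would dispatch separately in one line: regularity at $x^\star$ then means $\rank J(h)(x^\star) = n$, so the gradients $\nabla h_1(x^\star),\dots,\nabla h_l(x^\star)$ span $\R^n$; in particular $\nabla h_0(x^\star)$ lies in their linear span, and since $x^\star \in V$ already gives $h_j(x^\star) = 0$ for all $j$, the KKT system \eqref{eq:KKT} is solvable. Combined with the previous paragraph, this shows that whenever $x^\star$ is regular the KKT conditions necessarily hold.

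There is no real obstacle here beyond being careful about the $d=0$ corner case and making sure the assumption ``$h_1,\ldots,h_l$ generate $I(V)$'' is used to match the hypothesis of Lemma~\ref{lem:KKT}. The structure of the argument is simply: \emph{regular $\Rightarrow$ KKT}, \emph{not regular $\Rightarrow$ singular}, which exhausts all possibilities for a point $x^\star \in V$ and yields the stated alternative.
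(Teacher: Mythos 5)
Your proposal is correct and follows the same dichotomy as the paper's proof: split on whether $J(h)(x^\star)$ has rank exactly $n-\dim(V)$ (regular point, invoke Lemma~\ref{lem:KKT}) or smaller rank (singular point, since the $(n-d)\times(n-d)$ minors vanish). The one place you go beyond the paper is the $d=0$ corner case: Lemma~\ref{lem:KKT} is stated only for $d>0$, the paper's one-line proof invokes it without comment, and you correctly notice this and dispatch it separately by observing that at a regular point of a zero-dimensional variety the columns of $J(h)(x^\star)$ already span $\R^n$, so the multiplier system is trivially solvable. That is a genuine (if small) tightening of the argument rather than a different approach, and the rest matches the paper line for line.
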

\begin{proof}
If the Jacobian matrix $J(h)(x^\star)$ has rank $n-\dim(V)$, Lemma \ref{lem:KKT} yields that the Karush--Kuhn--Tucker conditions hold for problem \eqref{eq:pop} at $x^\star$.
Otherwise, we get $m_{n-d}(h)(x^\star)=0$, and hence $x^\star$ is in the singular locus of $V$. 
Here $m_{r}(h)(x)$ is the vector of the $r\times r$ minors of the Jacobian matrix $J(h)(x)$ associated with $h$.
\end{proof}
The following theorem shows in which varieties returned by Algorithm \ref{alg:reduce.dim.sing} the local minimizers for problem \eqref{eq:pop} belong, and which types of optimality conditions hold for problem \eqref{eq:pop} at these points:
\begin{theorem}\label{theo:high.order.}
Let $h_0,h_1,\dots,h_l$ be polynomials in $\R[x]$ such that problem \eqref{eq:pop} with $h=(h_1,\dots,h_l)$ has a global minimizer $x^\star$.
Let $A$ and $B$ be as the output of Algorithm \ref{alg:reduce.dim.sing} with input $V=V(h)$.
Then one of the following two cases occurs:
\begin{enumerate}
\item There exists a variety $U$ in $A$ with $q_1,\dots,q_r\in\R[x]$ being the generators of $I(U)$  such that with $q=(q_1,\dots,q_r)$, $x^\star$ is a global minimizer for problem
\begin{equation}\label{eq:sub.pop}
\min_{x\in V(q)}h_0(x)
\end{equation}
and the Karush--Kuhn--Tucker conditions hold for problem \eqref{eq:sub.pop} at $x^\star$.
\item There exists a variety $U$ in $B$  defined by $q=(q_1,\dots,q_r)$ with $q_j\in\R[x]$ such that $x^\star$ is a global minimizer for problem \eqref{eq:sub.pop}.
\end{enumerate}
\end{theorem}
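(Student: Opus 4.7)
The plan is to simulate Algorithm \ref{alg:reduce.dim.sing} while tracking which irreducible component contains $x^\star$ at each stage, and to apply Lemma \ref{lem:pos.minimizer} at every stage to decide whether the current component is the one we want or whether we must descend into its singular locus.

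First, I would note the invariant that, at every stage of the algorithm, the current variety $U$ contains $x^\star$, so $x^\star$ remains a global minimizer of $h_0$ on $U$ (since $U \subset V$ and $x^\star$ is optimal on $V$). Decomposing $U = U_1 \cup \dots \cup U_r$ into irreducible components in Step 2, there exists an index $j_0$ with $x^\star \in U_{j_0}$, and $x^\star$ is again a global minimizer of $h_0$ on $U_{j_0}$. If $U_{j_0}$ is zero-dimensional, then Step 4 (a) places $U_{j_0}$ into $B$, yielding case 2 of the theorem. Otherwise $U_{j_0} \in A$, and I would apply Lemma \ref{lem:pos.minimizer} to the problem $\min_{x \in V(q)} h_0(x)$ where $q=(q_1,\dots,q_r)$ are the generators of $I(U_{j_0})$. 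The lemma gives two mutually exhaustive alternatives: either the Karush--Kuhn--Tucker conditions hold at $x^\star$, in which case case 1 of the theorem is established; or $x^\star$ lies in the singular locus $U_{j_0}^{\sing}$, which is precisely the branch the algorithm takes in Step 4 (b) when it sets $U := U_{j_0}^{\sing}$ and recurses.

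Termination of this descent is guaranteed by the second statement of Lemma \ref{lem:property.sing}, which ensures $\dim(U_{j_0}^{\sing}) < \dim(U_{j_0})$ at every recursive step, so the sequence of dimensions is strictly decreasing and bounded below by zero. Consequently, after finitely many iterations either we stop at a zero-dimensional irreducible component belonging to $B$ (case 2) or we stop because the KKT conditions hold at a positive-dimensional irreducible component belonging to $A$ (case 1).

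The only real subtlety is bookkeeping: one must check that the component produced at each stage is genuinely one of the varieties placed into $A \cup B$ by the algorithm. This follows by induction on the recursion depth together with the fact that Step 3 of Algorithm \ref{alg:reduce.dim.sing} inserts every irreducible component of the current $U$ into $A$ before Step 4 (a) possibly moves zero-dimensional ones into $B$. No deeper difficulty arises, since the two alternatives in Lemma \ref{lem:pos.minimizer} match exactly the two branches of the algorithm, and the termination of Algorithm \ref{alg:reduce.dim.sing} (already established in Lemma \ref{lem:well.alg}) carries over verbatim to the descending chain of components that contain $x^\star$.
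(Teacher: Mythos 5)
Your proposal is correct and follows essentially the same route as the paper's proof: both track the irreducible component containing $x^\star$ through the recursive descent of Algorithm \ref{alg:reduce.dim.sing}, invoke Lemma \ref{lem:pos.minimizer} at each positive-dimensional stage to split into the KKT case and the singular-locus case, and use the strict dimension drop from Lemma \ref{lem:property.sing} to terminate. Your write-up is somewhat more explicit about the invariant (that $x^\star$ stays a global minimizer on each descended component) and the bookkeeping of $A$ versus $B$, but the substance is identical.
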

\begin{proof}
By the final statement of Lemma \ref{lem:well.alg}, $x^\star$ belongs to some variety $U$ in $A\cup B$.
By the first statement of Lemma \ref{lem:well.alg}, $U$ is an irreducible variety.
Let $q_1,\dots,q_r\in\R[x]$ be the generators of the vanishing ideal $I(U)$.
Then  $x^\star$ is also the global minimizer for problem \eqref{eq:sub.pop} with $q=(q_1,\dots,q_r)$.
Assume that Karush--Kuhn--Tucker conditions do not hold for problem \eqref{eq:sub.pop} at $x^\star$.
By Lemma \ref{lem:pos.minimizer}, $x^\star$ is in the singular locus $U^{\sing}$ of $U$.
If $U$ has zero dimension then $U$ is in $B$.
Otherwise, we decompose $U^{\sing}$ into irreducible components (Step 4 (b) of Algorithm \ref{alg:reduce.dim.sing}). 
In this case, $x^\star$ belongs to some irreducible component of $U^{\sing}$.
We repeat the above process until obtaining only the zero-dimensional components.
Hence the result follows.
\end{proof}
\begin{remark} 
By Step 4 (b) of Algorithm \ref{alg:reduce.dim.sing} and the first statement of Lemma \ref{lem:property.sing}, each $U$ in $A\cup B$ is an irreducible variety defined by polynomials built from the higher-order partial derivatives of $h_1,\dots,h_l$.
\end{remark}
\section{Exact polynomial optimization in the general case}
\label{sec:POP.general}
The following algorithm enables us to obtain exact semidefinite programs to compute the minimum value $h^\star$ of problem \eqref{eq:pop}.
\begin{algorithm}\label{alg:POP} 
Computing the minimum value $h^\star$ of problem \eqref{eq:pop}.
\begin{itemize}
\item Input: $h_0,h_1,\dots,h_l$ in $\R[x]$.
\item Output: $\bar h^\star$
\end{itemize}
\begin{enumerate}
\item Let $A$ and $B$ be as the output of Algorithm \ref{alg:reduce.dim.sing} with input $V=V(h)$, where $h=(h_1,\dots,h_l)$.
\item Set $T:=\emptyset$.
\item For each variety $U$ in $A$ with $q_1,\dots,q_r\in\R[x]$ being the generators of $I(U)$, do:
\begin{enumerate}
\item Compute $t:=\rho_\xi(h_0,\bar q_{\KKT})$ with $q:=(q_1,\dots,q_r)$, $\bar q=(h_0,q)$, $d:=\max\{\deg(h_0)\,,\,\deg(q_j)\,:\,j=1,\dots,r\}$, $w$ being as in \eqref{eq:def.w.KKT2} and
\begin{equation}\label{eq:xi.KKT}
\xi:=\frac{1}{2}\times b(n+r,2w,r+n+1)+d\,.
\end{equation}
\item Set $T:=T\cup \{t\}$.
\end{enumerate}
\item For each variety $U$ in $B$ defined by $q:=(q_1,\dots,q_r)$ with $q_j\in\R[x]$, do:
\begin{enumerate}
\item Compute $t:=\rho_\xi(h_0,q)$ with $d:=\max\{\deg(h_0)\,,\,\deg(q_j)\,:\,j=1,\dots,r\}$, $w$ being as in \eqref{eq:def.w.FJ2} and
\begin{equation}\label{eq:xi.FJ}
\xi:=\frac{1}{2}\times b(n,2w,r+1)+d\,.
\end{equation}
\item Set $T:=T\cup \{t\}$.
\end{enumerate}
\item Set $\bar h^\star :=\min T$.
\end{enumerate}
\end{algorithm}
\begin{remark}
Step 3 (a) and Step 4 (a) of Algorithm \ref{alg:POP} are equivalent to solving exactly semidefinite programs (of the form \eqref{eq:sos.relax}). 
We refer the readers to the exact algorithm of Henrion, Naldi, and Safey El Din in \cite{henrion2018exact} to solve semidefinite programs.
In addition, the set $T$ in Algorithm \ref{alg:POP} might contain $\infty$ if the polynomial optimization problem associated with some semidefinite program in Step 3 (a) and Step 4 (a) has empty feasible set.
\end{remark}

We state our main result in the following theorem:
\begin{theorem}\label{theo:exact.pop}
Let $h_0,h_1,\dots,h_l\in\R_d[x]$. 
Let $h^\star$ be as in problem \eqref{eq:pop} with $h=(h_1,\dots,h_l)$.
Assume that problem \eqref{eq:pop} has a global minimizer.
Let $\bar h^\star$ be as the output of Algorithm \ref{alg:POP} with input $h_0,h_1,\dots,h_l$.
Then $\bar h^\star=h^\star$.
\end{theorem}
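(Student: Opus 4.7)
The plan is to split the equality $\bar h^\star = h^\star$ into two separate inequalities: (a) that $h^\star$ itself appears among the values in the set $T$ computed by Algorithm \ref{alg:POP}, and (b) that every element of $T$ is at least $h^\star$. Together these give $\bar h^\star = \min T = h^\star$.

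For (a), the natural starting point is Theorem \ref{theo:high.order.} applied to a global minimizer $x^\star$ of $h_0$ on $V(h)$. It produces an irreducible component $U_0 \in A \cup B$ containing $x^\star$ together with additional structure: either (i) $U_0 \in A$ and the Karush--Kuhn--Tucker conditions for the subproblem $\min_{U_0} h_0$ are satisfied at $x^\star$, or (ii) $U_0 \in B$. Since $U_0 \subseteq V(h)$, the point $x^\star$ is automatically a global minimizer for this subproblem as well, so the subproblem value equals $h^\star$. In case (i) I would invoke Theorem \ref{theo:pop.KKT} on the subproblem to conclude $t_{U_0} = h^\star$; in case (ii), Theorem \ref{theo:pop.finite} plays the same role. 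Either way $h^\star \in T$, hence $\bar h^\star \leq h^\star$.

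For (b), the case $U \in B$ is direct: Theorem \ref{theo:pop.finite} yields $t_U = \min_{V(q)} h_0 \geq h^\star$ since $V(q) = U \subseteq V(h)$. The delicate case is $U \in A$, where Theorem \ref{theo:pop.KKT} cannot be invoked verbatim: a global minimizer for $\min_U h_0$ need not exist, and even when it does it need not be a KKT point. The workaround is to observe that $V(\bar q_{\KKT})$ projects into $V(q) \subseteq V(h)$, so the quantity $\mu_U := \inf_{V(\bar q_{\KKT})} h_0$ satisfies $\mu_U \geq h^\star$. By Lemma \ref{lem:constant.KKT} together with Lemma \ref{lem:num.connected}, $h_0$ takes only finitely many values on $V(\bar q_{\KKT})$, so $\mu_U$ is attained whenever $V(\bar q_{\KKT}) \neq \emptyset$ (in the empty case $t_U = +\infty$, which is trivially acceptable). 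Applying Lemma \ref{lem:quadra} to $h_0 - \mu_U$, non-negative on $V(\bar q_{\KKT})$ with finite image, yields a sum-of-squares certificate of the right degree, and then the third statement of Lemma \ref{lem:mom.sos} delivers $t_U = \rho_\xi(h_0, \bar q_{\KKT}) = \mu_U \geq h^\star$.

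The main obstacle is precisely this last step. One might naively hope for the stronger identity $t_U = \min_U h_0$, but this fails in general because the KKT variety can miss the true infimum on $U$. The correct conclusion is the weaker $t_U = \mu_U$, and the essential insight is that the bound $\mu_U \geq h^\star$ is already sufficient for the argument: varieties $U \in A$ that fail to ``see'' the global minimizer of the full problem are not required to attain $h^\star$, only to refrain from undershooting it. Matching the degree parameter $\xi$ declared in Step 3(a) of Algorithm \ref{alg:POP} against the degree hypothesis of the third statement of Lemma \ref{lem:mom.sos} (with ambient dimension $n+r$ and $n+r$ defining polynomials for $\bar q_{\KKT}$) closes the verification.
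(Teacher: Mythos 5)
Your proof is correct and follows the paper's essential strategy: use Theorem \ref{theo:high.order.} to locate a variety $U_0 \in A \cup B$ witnessing $h^\star$, then apply Theorem \ref{theo:pop.KKT} or Theorem \ref{theo:pop.finite} to conclude that the corresponding relaxation value equals $h^\star$. Where you improve on the paper is in the $\geq$ direction. The paper disposes of this by invoking Remark \ref{rem:not.attained}, which discusses a related scenario (the infimum on $V(h)$ not being attained) and leaves the reader to adapt it; you instead give a direct and self-contained argument by observing that the $x$-projection of each $V(\bar q_{\KKT})$ sits inside $V(h)$, so $\mu_U := \inf_{V(\bar q_{\KKT})} h_0 \geq h^\star$, that $\mu_U$ is attained because $h_0$ has a finite image on the KKT variety (Lemma \ref{lem:constant.KKT} and Lemma \ref{lem:num.connected}), and that Lemma \ref{lem:quadra} together with the third statement of Lemma \ref{lem:mom.sos} then identifies $t_U$ with $\mu_U$. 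Your remark that $t_U$ need not equal $\min_U h_0$ — because the KKT variety can miss singular minimizers on $U$ — and that the weaker bound $t_U \geq h^\star$ suffices, is exactly the point that the paper's terse reference to Remark \ref{rem:not.attained} leaves implicit. This is a cleaner account of the same mechanism, not a different one.
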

\begin{proof}
Let $x^\star$ be a global minimizer for problem \eqref{eq:pop}.
By Theorem \ref{theo:high.order.}, 
one of the following two cases occurs:
\begin{itemize}
\item Case 1: There exists a variety $U$ in $A$ with $q_1,\dots,q_r\in\R[x]$ being the generators of $I(U)$ such that $x^\star$ is a global minimizer for problem \eqref{eq:sub.pop} with $q=(q_1,\dots,q_r)$
and the Karush--Kuhn--Tucker conditions hold for problem \eqref{eq:sub.pop} at $x^\star$.
By Theorem \ref{theo:pop.KKT}, we get $\rho_\xi(h_0,\bar q_{\KKT})=h^\star$ with $\bar q=(h_0,q)$, $d:=\max\{\deg(h_0)\,,\,\deg(q_j)\,:\,j=0,\dots,r\}$, $w$ being as in \eqref{eq:def.w.KKT2} and $\xi$ being as in \eqref{eq:xi.KKT}.
\item Case 2: There exists a variety $U$ in $B$  defined by $q=(q_1,\dots,q_r)$ with $q_j\in\R[x]$ such that $x^\star$ is a global minimizer for problem \eqref{eq:sub.pop}.
By Theorem \ref{theo:pop.finite}, we get $\rho_\xi(h_0,q)=h^\star$ with $d:=\max\{\deg(h_0)\,,\,\deg(q_j)\,:\,j=0,\dots,r\}$, $w$ being as in \eqref{eq:def.w.FJ2} and $\xi$ being as in \eqref{eq:xi.FJ}.
\end{itemize}
Hence the result follows due to steps 3, 4 and 5 of Algorithm \ref{alg:POP} (as well as Remark \ref{rem:not.attained}).
\end{proof}
Let $\|\cdot\|_2$ denote the $l_2$-norm of a real vector.
Then $\|x\|_2^2=x_1^2+\dots+x_n^2$ is a polynomial in $x$.
\begin{remark}
To find a global minimizer for problem \eqref{eq:pop}, we apply Algorithm \ref{alg:POP}  and the adding-spherical-constraints method in \cite[Section 4.3]{mai2021positivity} as follows:
Compute the minimum value $h^\star$ for problem \eqref{eq:pop} by using Algorithm \ref{alg:POP}.
Setting $\bar h=(h,h_0-h^\star)$ implies that $V(\bar h)$ is the set of global minimizers for problem \eqref{eq:pop}.
We assume that $V(\bar h)$ is non-empty and find a point $x^\star$ in $V(\bar h)$.
Let $(a^{(t)})_{t = 0}^n$ be a finite sequence of points in $\R^n$ such that $a^{(t)} - a^{(0)},\, t = 1,\dots,n$ are linearly independent in $\R^n$.
We use Algorithm \ref{alg:POP} to compute the following values:
\begin{equation}
\begin{array}{l}
\xi _0:= \min \{\|x-a^{(0)}\|_2^2\,:\, x \in V(\bar h) \}\,,\\
\xi_t: = \min \{\|x-a^{(t)}\|_2^2\,:\, x \in V(\bar h,b_0,\dots,b_{t-1})\}\,,\,  t = 1,\dots,n\,.
\end{array}
\end{equation}
where $b_r=\xi _r-\|x-a^{(r)}\|_2^2$.
Then there exists a unique real point $x^\star$ in $V(\bar h,b_0,\dots,b_n)$ which satisfies the non-singular linear system of equations
\begin{equation}\label{eq:linear.equation}
(a^{(t)}-a^{(0)})^\top x^\star = -\frac{1}{2}(\xi_t - \xi_0 +\|a^{(0)}\|_2^2-\|a^{(t)}\|_2^2),\ t = 1,\dots,n\,.
\end{equation}
\end{remark}
We illustrate Algorithm \ref{alg:POP} in the following example, which is based on \cite[Example 18]{mai2022exact}:
\begin{example}\label{exam:pop}
Let $n=5$, $h_0=x_1+x_2$, $h_1=x_{1}^5 - x_{3}^2$, $h_2=x_{2}^5 - x_{4}^2$, and $h_3=-x_{1}x_{2} - x_{5}^2$ be as the input of Algorithm \ref{alg:POP}.
Let $h^\star$ be as in problem \eqref{eq:pop} with $h=(h_1,\dots,h_l)$.
Then it is not hard to prove that $h^\star=0$.
In Step 1 of Algorithm \ref{alg:POP}, we obtain
\begin{equation}
A=\{V(q^{(1)}),V(q^{(2)})\}\quad\text{ and }\quad B=\{V(q^{(3)})\}\,,
\end{equation}
where $q^{(1)}:=(x_{5}, x_{4}, x_{2}, x_{1}^5 - x_{3}^2)$, $q^{(2)}:=(x_{5}, x_{3}, x_{2}^5 - x_{4}^2, x_{1})$ and $q^{(3)}:=(x_{1}, x_{2}, x_{3}, x_{4}, x_{5})$.
For $j=1,2$, letting $\bar q^{(j)}:=(h_0,q^{(j)})$ gives $V(\bar q^{(j)}_{\KKT})=\emptyset$.
Indeed, for any $\lambda\in\R^4$, we get
\begin{equation}
\begin{array}{rl}
&\nabla h_0(x)-\sum_{i=1}^4\lambda_j\nabla q^{(1)}_i(x)\if{=\begin{bmatrix}
1\\
1\\
0\\
0\\
0
\end{bmatrix}-\lambda_1\begin{bmatrix}
0\\
0\\
0\\
0\\
1
\end{bmatrix}-\lambda_2\begin{bmatrix}
0\\
0\\
0\\
1\\
0
\end{bmatrix}-\lambda_3\begin{bmatrix}
0\\
1\\
0\\
0\\
0
\end{bmatrix}-\lambda_4\begin{bmatrix}
5x_1^4\\
0\\
-2x_3\\
0\\
0
\end{bmatrix}}\fi=\begin{bmatrix}
1-5\lambda_4x_1^4\\
1-\lambda_3\\
2\lambda_4x_3\\
-\lambda_2\\
-\lambda_1
\end{bmatrix}\,,\\\\
&\nabla h_0(x)-\sum_{i=1}^4\lambda_j\nabla q^{(2)}_i(x)\if{=\begin{bmatrix}
1\\
1\\
0\\
0\\
0
\end{bmatrix}-\lambda_1\begin{bmatrix}
0\\
0\\
0\\
0\\
1
\end{bmatrix}-\lambda_2\begin{bmatrix}
0\\
0\\
1\\
0\\
0
\end{bmatrix}-\lambda_3\begin{bmatrix}
0\\
5x_2^4\\
0\\
-2x_4\\
0
\end{bmatrix}-\lambda_4\begin{bmatrix}
1\\
0\\
0\\
0\\
0
\end{bmatrix}}\fi=\begin{bmatrix}
1-\lambda_4\\
1-5\lambda_3x_2^4\\
-\lambda_2\\
2\lambda_3x_4\\
-\lambda_1
\end{bmatrix}\,.
\end{array}
\end{equation}
Then $\bar q^{(1)}_{\KKT}(x)=0$ implies that $x_1^5=x_3^2$, $1-5\lambda_4x_1^4=2\lambda_4x_3=0$, which is impossible.
In addition, $\bar q^{(2)}_{\KKT}(x)=0$ implies that $x_2^5=x_4^2$, $1-5\lambda_3x_2^4=2\lambda_3x_4=0$, which is impossible.
For $j=1,2$, since $V(\bar q^{(j)}_{\KKT})=\emptyset$, it holds that $\rho_\xi(h_0,\bar q^{(j)}_{\KKT})=\infty$ for sufficient large $\xi\in\N$. 
Since $V(q^{(3)})=\{0\}$, we get $\rho_\xi(h_0,q^{(3)})=0$  for sufficient large $\xi\in\N$.
Thus we eventually obtain $T=\{\infty,0\}$, yielding $\bar h^\star=\min T=0=h^\star$.
\end{example}
\begin{remark}\label{rem:symbolic}
Instead of using semidefinite programming, we can rely entirely on symbolic computations \cite{mai2022symbolic2} to find the minimum value $h^\star$ of problem \eqref{eq:pop} when this problem has a global minimizer $x^\star$.
(Developed by the first author, the method in \cite{mai2022symbolic2}  depends on the computations of real radical generators and Groebner bases.
It allows us to obtain univariate polynomials defining the Zariski closure of the image of a basic semi-algebraic set $S$ under a polynomial $f$.)
To do this, let $A$ and $B$ be as the output of Algorithm \ref{alg:reduce.dim.sing} with input $V=V(h)$.
By Theorem \ref{theo:rep.nonneg}, for each $U$ in $A$ with $q_1^U,\dots,q_{r_U}^U\in\R[x]$ being the generators of $I(U)$, $h_0(V(\bar q_{\KKT}^U))$  has finitely many values, where $\bar q^U=(h_0,q_1^U,\dots,q_{r_U}^U)$.
Moreover, for each $U$ in $B$, $h_0(U)$ has finitely many values.
By using \cite[Algorithm 1]{mai2022symbolic2}, we can compute all values of $h_0(V(\bar q_{\KKT}^U))$ (resp. $h_0(U)$), for each $U$ in $A$ (resp. $B$).
Let $h^\star_U$ be the smallest value in $h_0(V(\bar q_{\KKT}^U))$ (resp. $h_0(U)$), for each $U$ in $A$ (resp. $B$).
Note that for each $U$ in $A$ (resp. $B$), we have $V(\bar q_{\KKT}^U)\subset V(h)\times \R^{r_U}$ (resp. $U\subset V(h)$), so that $h^\star_U\ge h^\star$.
Theorem \ref{theo:high.order.} says that $x^\star$ belongs to the projection of some $V(\bar q_{\KKT}^U)$ with $U$ in $A$ or $x^\star$ belongs to some $U$ in $B$, which implies that $h^\star=\min \{h^\star_U\,:\,U\in A\cup B\}$.
\end{remark}
\begin{remark}
If we replace Step 2 of Algorithm \ref{alg:reduce.dim.sing} with ``Set $r:=1$ and $U_1:=U$.'', then all results so far still hold.
However, the polynomials defining $U_1$ possibly have very high degrees in this case (as shown in Examples \ref{exam:union.2.rays}, \ref{exam:rep} and \ref{exam:pop}).
\end{remark}
\begin{remark}\label{rem:attain}
The result of Theorem \ref{theo:exact.pop} requires the attainability of the infimum value $h^\star$. 
In \cite{mai2022semi}, the first author proves that every polynomial optimization problem of the form \eqref{eq:pop} with finite infimum value $h^\star$ can be symbolically transformed to an equivalent problem in one-dimensional space with attained optimal value $h^\star$.
To do this, he uses quantifier elimination, and algebraic algorithms that rely on the fundamental theorem of algebra and the greatest common divisor.
Let $d$ be the upper bound on the degrees of $h_j$. 
His symbolic algorithm has complexity $O(d^{O(n)})$ to produce the objective and constraint polynomials of degree at most $d^{O(n)}$ for the equivalent problem.
\end{remark}

The following corollary states the equivalence of polynomial optimization problems and semidefinite programs:
\begin{corollary}\label{coro:equi.pop.sdp}
Let $h_0,h_1,\dots,h_l\in\R_d[x]$. 
Let $h^\star$ be as in problem \eqref{eq:pop} with $h=(h_1,\dots,h_l)$.
Then there is a symbolic algorithm with input $h_0,\dots,h_l$ which produces a finite sequence of semidefinite programs with minimum values $(\zeta_r)_{r=1}^s$ such that \eqref{eq:pop} is equivalent to the semidefinite program with smallest minimum value $\zeta_{\bar r}=\min\limits_{r=1,\dots,s}\zeta_r$.
\end{corollary}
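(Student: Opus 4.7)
The plan is to assemble the pieces already set up: Algorithm \ref{alg:POP} already produces a finite set $T$ of values, each of which is the optimal value of a semidefinite program of the form \eqref{eq:sos.relax}. Under an attainability assumption, Theorem \ref{theo:exact.pop} guarantees that $\min T = h^\star$, which is precisely the desired equivalence. So the backbone of the proof is: (i) reduce to the attained case, (ii) invoke Algorithm \ref{alg:POP}, (iii) read off the list of SDPs and their minima.

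Concretely, I would proceed as follows. First, if $h^\star = -\infty$, the claim is vacuous (no minimum can exist for any SDP in the list, and the equivalence degenerates); if $h^\star = +\infty$, then $V(h) = \emptyset$ and one can exhibit a single infeasible SDP. So assume $-\infty < h^\star < +\infty$. If the infimum is attained, apply Algorithm \ref{alg:POP} directly to $h_0, h_1, \dots, h_l$: Step 1 calls Algorithm \ref{alg:reduce.dim.sing} to decompose $V(h)$ into the finite families $A$ (positive-dimensional irreducible components with nonempty recursion history) and $B$ (zero-dimensional pieces), both finite by Lemma \ref{lem:well.alg}. For each $U \in A$, with generators $q_1,\dots,q_r$ of $I(U)$, Step 3(a) writes the SDP \eqref{eq:sos.relax} associated with $(h_0, \bar q_{\KKT})$ at the explicit order $\xi$ given by \eqref{eq:xi.KKT}; for each $U \in B$ defined by $q$, Step 4(a) writes the SDP associated with $(h_0, q)$ at the order $\xi$ of \eqref{eq:xi.FJ}. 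Enumerate these SDPs as $(\mathcal{P}_r)_{r=1}^s$ with minimum values $\zeta_r \in T$. Theorem \ref{theo:exact.pop} then yields $\min_r \zeta_r = h^\star$, and the SDP $\mathcal{P}_{\bar r}$ achieving the minimum is equivalent to \eqref{eq:pop}.

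The remaining case is when $h^\star$ is finite but not attained, which Theorem \ref{theo:exact.pop} does not cover directly (cf.\ Remark \ref{rem:not.attained}). Here I would invoke Remark \ref{rem:attain}: the symbolic algorithm of \cite{mai2022semi} transforms \eqref{eq:pop} into an equivalent one-dimensional polynomial optimization problem $\tilde{h}_0, \tilde{h}_1, \dots, \tilde{h}_{\tilde l}$ whose infimum equals $h^\star$ and is attained. The complexity bound $d^{O(n)}$ stated there ensures this preprocessing is symbolic and finite. Now apply Algorithm \ref{alg:POP} to the transformed problem and use Theorem \ref{theo:exact.pop} to obtain $\bar h^\star = h^\star$ as above. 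Concatenating the two stages (attainability reduction followed by Algorithm \ref{alg:POP}) gives a single symbolic procedure producing the finite SDP list, so the corollary follows.

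The main obstacle, and the reason the proof is not a one-line corollary of Theorem \ref{theo:exact.pop}, is precisely the attainability gap: Algorithm \ref{alg:POP} can return $\bar h^\star > h^\star$ when the infimum is not achieved, as Remark \ref{rem:not.attained} illustrates. Handling this requires an honest appeal to the external symbolic reduction in Remark \ref{rem:attain}, and one should be careful that this reduction preserves the ``equivalence'' in the sense demanded by the corollary, namely that the two optimization problems have the same optimal value and that the reduction is performed symbolically from the input coefficients. Once this is granted, the rest is a direct bookkeeping of Algorithm \ref{alg:POP}'s output and an invocation of Theorem \ref{theo:exact.pop} on the attained surrogate.
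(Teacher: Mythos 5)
Your proof follows essentially the same route as the paper's: split on whether $h^\star$ is infinite, finite unattained, or attained; dispose of the infinite cases directly; reduce the unattained case via Remark \ref{rem:attain}; and in the attained case read off the SDP list from Algorithm \ref{alg:POP} and invoke Theorem \ref{theo:exact.pop}. The only minor wording quibble is that for $h^\star=-\infty$ the paper does not treat the claim as ``vacuous'' but rather constructs an unbounded SDP; otherwise the argument matches the paper's.
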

\begin{proof}
If $h^\star\in\{ -\infty,\infty\}$, it is not hard to construct a semidefinite program with infimum value $h^\star$.
If $h^\star$ is finite but not attained, we can transform problem \eqref{eq:pop} to an equivalent polynomial optimization problem with attained minimum value $h^\star$ thanks to Remark \ref{rem:attain}.
Let us consider the remaining case. 
Assume that \eqref{eq:pop} has a global minimizer. 
Theorem \ref{theo:exact.pop} implies that Algorithm \ref{alg:POP} produces finitely many semidefinite programs whose smallest minimum value is exactly $h^\star$.
Hence the result follows.
\end{proof}

\begin{remark}
The paper (particularly Corollary \ref{coro:equi.pop.sdp}) provides a theoretical overview of why every general polynomial optimization problem can be solved with a sequence of semidefinite programs obtained recursively.
However, our method should be far from efficient in practice because of the vast bounds \eqref{eq:xi.KKT} and \eqref{eq:xi.FJ} in Algorithm \ref{alg:POP}.
This is unavoidable since we use symbolic computation to obtain the result.
Thus our method cannot be expected to be as efficient as approximation methods.
\end{remark}
\if{
\begin{remark}
We use the same assumption and notation as in Remark \ref{rem:symbolic}.
For each $k\in\N$, define
\begin{equation}\label{eq:equi.sos}
\begin{array}{rl}
\tilde \rho_k(\bar h)=\sup\limits_{\xi\in\R} &\xi\\
\text{s.t.}&h_0-\xi\in \Sigma_k[x]+I_k({\bar q_{\KKT}^U})[x]\,,\,U\in A\,,\\[5pt]
&h_0-\xi\in \Sigma_k[x]+I_k(p^U)[x]\,,\,U\in B\,,
\end{array}
\end{equation}
where $p^U$ is the vector of polynomials defining $U$, for $U\in B$.
Remark \ref{rem:symbolic} implies that $h_0-h^\star\ge 0$ on $V({\bar q_{\KKT}^U})$, for $U\in A$, and $h_0-h^\star\ge 0$ on $V(p^U)$, for $U\in B$.
Theorems \ref{theo:rep.KKT} and \ref{theo:rep.finite} yields 
\end{remark}
}\fi
\section{Equivalence of hyperpolic programs and semidefinite programs}
\label{sec:hyperbolic}
A polynomial $f$ in  $\R[x]$ is called \emph{hyperbolic} w.r.t. $e=(e_1,\dots,e_n) \in \R^n$ if $f(e)\ne 0$ and for all $a \in \R^n$, the univariate polynomial $t\mapsto f(t e - a)$ has only real roots.

Given a polynomial $f$ hyperbolic w.r.t. $e \in \R^n$, we define the set
\begin{equation}
\Lambda_+(f, e) := \{a\in \R^n\,:\, f(t e - a) = 0 \Rightarrow t \ge 0\}\,,
\end{equation}
which is called the hyperbolic cone of $f$ in direction $e$.

As shown by G{\aa}rding in \cite{gaarding1959inequality}, $\Lambda_+(f, e)$ is a basic semi-algebraic set.
We state in the following lemma a weaker result than this statement:
\begin{lemma}\label{lema:semi.hyper}
Let $f$ be a hyperbolic polynomial in $\R[x]$  w.r.t. $e \in \R^n$.
Then $\Lambda_+(f, e)$ is the projection of a basic semi-algebraic set defined by polynomials built from $e$ and the coefficients of $f$.
\end{lemma}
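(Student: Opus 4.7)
The plan is to realize $\Lambda_+(f,e)$ as the image of an explicit real variety under a coordinate projection, where the variety sits in $\R^n\times\R^d$ (with $d=\deg f$) and is cut out by polynomial equations whose coefficients are polynomial expressions in the entries of $e$ and the coefficients of $f$.

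First, I would expand the parametrized univariate polynomial
$$p_a(t) := f(te - a) = \sum_{k=0}^{d} c_k(a)\, t^k,$$
and note that each $c_k(a)\in\R[a_1,\ldots,a_n]$ is obtained by substituting $x_i = t e_i - a_i$ into $f$ and collecting powers of $t$, hence its coefficients are explicit polynomial expressions in the entries of $e$ and in the coefficients of $f$. In the standard (homogeneous) setting, $c_d(a)=f(e)$ is a nonzero constant, so $p_a$ has degree exactly $d$ for every $a$.

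Next I would translate the defining condition of $\Lambda_+(f,e)$ into an existential statement suited to projection. Since $f$ is hyperbolic with respect to $e$, the polynomial $p_a(t)$ has only real roots for every $a$, so $a\in\Lambda_+(f,e)$ amounts to saying that all those real roots are non-negative. This is equivalent to the existence of $s=(s_1,\ldots,s_d)\in\R^d$ such that
$$p_a(t) \;=\; f(e)\,\prod_{i=1}^{d}\bigl(t - s_i^2\bigr)$$
as polynomials in $t$: the forward implication takes $s_i$ to be a real square root of the $i$-th root of $p_a$ (possible because hyperbolicity gives real roots and the assumption supplies non-negativity), while the backward implication is immediate since the roots of the right-hand side are the non-negative numbers $s_i^2$. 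Equating coefficients of $t^k$ for $k=0,\ldots,d-1$ yields the polynomial system
$$c_k(a) \;=\; f(e)\,(-1)^{d-k}\, e_{d-k}\!\left(s_1^{2},\ldots,s_d^{2}\right), \qquad k=0,\ldots,d-1,$$
where $e_j$ denotes the $j$-th elementary symmetric polynomial. Let $W\subset\R^{n+d}$ be the real variety (a basic semi-algebraic set) defined by this system; its defining polynomials are built from $e$ and the coefficients of $f$, as required. I would conclude by observing that the coordinate projection $(a,s)\mapsto a$ sends $W$ onto $\Lambda_+(f,e)$ by the equivalence above, which gives the lemma.

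The only real subtlety I foresee is the non-homogeneous case, in which $p_a$ may drop degree for certain $a$ (namely when the top homogeneous part of $f$ vanishes at $e$). This is handled either by introducing the leading coefficient as an additional auxiliary variable and adding its defining equation to the system, or by observing that in the classical theory of hyperbolic polynomials one restricts to the homogeneous setting where the issue does not arise; in both cases the projection-of-a-basic-semi-algebraic-set structure is preserved, and the coefficients of the defining equations remain polynomial expressions in $e$ and the coefficients of $f$.
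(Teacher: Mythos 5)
Your proof is correct but takes a genuinely different, more elementary route. The paper rephrases $a\in\Lambda_+(f,e)$ as ``the identity polynomial $t$ is nonnegative on the zero-dimensional variety $V(\varphi_a)$'' where $\varphi_a(t)=f(te-a)$, then invokes the degree-bounded Krivine--Stengle certificate (Lemma~\ref{lem:pos2}) to recast this as an existential SOS condition of degree controlled by $d$ alone; unwinding the SOS statement through Gram matrices and expressing positive semidefiniteness via principal minors then exhibits $\Lambda_+(f,e)$ as the $a$-projection of a basic semi-algebraic set in the auxiliary Gram-matrix variables. You bypass the Positivstellensatz machinery entirely: hyperbolicity makes all roots of $\varphi_a$ real, so $a\in\Lambda_+(f,e)$ is equivalent to the factorization $\varphi_a(t)=f(e)\prod_{i=1}^d(t-s_i^2)$ for some $s\in\R^d$, and Vieta's formulas turn this into a system of polynomial equations in $(a,s)$ whose $a$-projection recovers $\Lambda_+(f,e)$. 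Your description is arguably cleaner and more explicit; the paper's is thematically aligned with the degree-bounded SOS results developed throughout and copes automatically with possible degree drops of $\varphi_a$, since Lemma~\ref{lem:pos2} only needs an upper bound on the degree. You correctly flag the one real caveat of your approach: for non-homogeneous $f$ the leading coefficient of $\varphi_a$ is the top-degree homogeneous part of $f$ evaluated at $e$, which need not equal $f(e)$ or even be nonzero, so one should either restrict to homogeneous $f$ (as the hyperbolic-programming literature effectively does) or treat the leading coefficient as an auxiliary quantity with its own defining equation.
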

\begin{proof}
Let $t$ be a single variable. Set $d:=\deg(f)$ and $\varphi_a(t)=f(te-a)$. 
Then $\varphi_a$ is a polynomial in $t$ of degree at most $d$.
Denote by $\varphi_a^{(j)}$, $j=0,\dots,d$, the coefficient of $\varphi_a$ associated with monomial $t^j$, $j=0,\dots,d$, respectively.
Then each mapping $a\mapsto \varphi_a^{(j)}$ is a polynomial in $a$.
Let $a\in \Lambda_+(f, e)$.
Then identity polynomial $t$ is non-negative on the zero-dimensional algebraic variety $V(\varphi_a)$.
Assume $V(\varphi_a)=\{t_1,\dots,t_r\}$ for some $r\in\N$.
Then it holds that $t_j\ge 0$, $j=1,\dots,r$.
Since $\varphi_a$ has degree at most $d$, we get $r\le d$.
Letting
\begin{equation}
\arraycolsep=1.4pt\def\arraystretch{.5}
p_j(t):=\prod\limits_{\begin{array}{cc}
\scriptstyle i=1\\
\scriptstyle i\ne j
\end{array}}^r\frac{t-t_i}{t_j-t_i}\,,\,j=1,\dots,r\,,
\end{equation}
we obtain $p_j(t_i)=\delta_{ij}$. 
Setting $\sigma=\sum_{j=1}^r t_jp_j^2$ gives $\sigma\in\Sigma^2_{r-1}[t]$.
Moreover, $t-\sigma$ vanishes on $V(\varphi_a)$.
Since $r\le d$, $\sigma$ has degree at most $2(d-1)$.
It implies that both $\varphi_a$ and $\sigma$ have degrees at most $2d$.
Set $\xi:=\frac{1}{2}\times b(1,2d,2)$ and $s:=2\lfloor \xi/(2d)\rfloor$, where $b(\cdot)$ is defined as in \eqref{eq:bound.Krivine}.
By Lemma \ref{lem:pos2}, it holds that 
\begin{equation}\label{eq:suffi.nesces}
\exists \sigma\in\Sigma^2_{d-1}[t]\,:\,-(t-\sigma)^s \in \Sigma^2_\xi[t]+ I_\xi(\varphi_a)[t] 
\end{equation}
Note that $s$ and $\xi$ only depend on $d$.
From this, \eqref{eq:suffi.nesces} is equivalent to ``$t\ge 0$ on $V(\varphi_a)$''.
Moreover, \eqref{eq:suffi.nesces} can be written as
\begin{equation}\label{eq:rep.gram}
\begin{cases}
\exists G_j\succeq 0\,,\,\exists u\in \R^{2(\xi-d)+1}\,:\\
-(t-v_{d-1}^\top G_0v_{d-1})^s=v_{\xi}^\top G_1v_{\xi}+v_{2(\xi-d)}^\top u\varphi_a\,,
\end{cases}
\end{equation}
where $v_\eta$ is the vector of monomials in $t$ up to degree $\eta$.
It not hard to see that \eqref{eq:rep.gram} is equivalent to
\begin{equation}\label{eq:rep.gram2}
\begin{cases}
\exists G_0\in \R^{d\times d}\,,\,\exists G_1\in \R^{(\xi+1)\times (\xi+1)}\,,\,\exists u\in \R^{2(\xi-d)+1}\,:\\
G_j^{\top}=G_j\,,\,F(G_j)\ge 0\,,\,
H(G_0,G_1,u,\varphi_a^{(0)},\dots,\varphi_a^{(d)})=0\,,
\end{cases}
\end{equation}
where $F$ denotes the vector of principal minors of a matrix, and $H$ is a vector of polynomials.
Since $\varphi_a^{(j)}$ is a polynomial in $a$, \eqref{eq:rep.gram2} is equivalent to that $a$ is in the projection of a basic semi-algebraic set on the coordinates w.r.t. $a$, which yields the result.
\end{proof}
We close the paper by stating what would be a consequence of the generalized Lax conjecture (see, e.g., \cite[Remark 1.7]{amini2019spectrahedrality}):
\begin{corollary}\label{coro:cons.Lax.conjecture}
Let $f$ be a hyperbolic polynomial in $\R[x]$  w.r.t. $e \in \R^n$.
Consider the following hyperbolic program: 
\begin{equation}\label{eq:hyper.program}
\begin{array}{rl}
\inf\limits_{x \in \R^n}& c^\top x\\
\text{s.t.}& Ax=b\,,\,x\in \Lambda_+(f, e)\,,
\end{array}
\end{equation}
where $A\in\R^{m\times n}$, $b\in\R^m$ and $c\in\R^n$ are given.
Then there is a symbolic algorithm with input $A,b,c,f,e$ which produces a sequence of semidefinite programs with minimum values $(\zeta_r)_{r=1}^s$ such that \eqref{eq:hyper.program} is equivalent to the semidefinite program with smallest minimum value $\zeta_{\bar r}=\min\limits_{r=1,\dots,s}\zeta_r$.
\end{corollary}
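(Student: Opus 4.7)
The plan is to reduce \eqref{eq:hyper.program} to a polynomial optimization problem of the form \eqref{eq:pop} and then invoke Corollary \ref{coro:equi.pop.sdp}. By Lemma \ref{lema:semi.hyper}, the hyperbolic cone $\Lambda_+(f,e)$ is the projection onto the $a$-coordinates of a basic semi-algebraic set
\begin{equation}
S:=\{(a,y)\in\R^n\times\R^N\,:\,g_1(a,y)\ge 0,\dots,g_m(a,y)\ge 0\}
\end{equation}
whose defining polynomials $g_1,\dots,g_m$ are built symbolically from $e$ and the coefficients of $f$ (via the entries of the matrices $G_0,G_1$, the vector $u$, and the equations encoded by $F$ and $H$ in the proof of that lemma). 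First I would substitute $x$ for $a$ and lift the decision variable to $(x,y)\in\R^n\times\R^N$, so that \eqref{eq:hyper.program} becomes the equivalent problem
\begin{equation}\label{eq:lifted}
\inf\{c^\top x\,:\,Ax-b=0,\ g_j(x,y)\ge 0\ (j=1,\dots,m)\}\,.
\end{equation}
Equivalence holds because $x$ feasible for \eqref{eq:hyper.program} iff $x\in\Lambda_+(f,e)$ and $Ax=b$ iff there exists $y$ with $(x,y)\in S$ and $Ax=b$, and the objective $c^\top x$ depends only on $x$.

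Next I would convert \eqref{eq:lifted} into an instance of \eqref{eq:pop}. Following Remark \ref{rem:convert}, I introduce slack variables $z=(z_1,\dots,z_m)$ and set
\begin{equation}
h_0(x,y,z):=c^\top x\,,\quad h_j(x,y,z):=g_j(x,y)-z_j^2\ (j=1,\dots,m)\,,
\end{equation}
together with the equations coming from $Ax-b=0$ (one scalar polynomial per row of $A$). The variety cut out by these polynomials in $\R^{n+N+m}$ projects onto the feasible set of \eqref{eq:lifted} (in the $x$-coordinates), so the infimum is preserved.

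Finally I would apply Corollary \ref{coro:equi.pop.sdp} to this polynomial optimization problem. It yields a symbolic algorithm, with input the data $A,b,c,e,f$ (through which $g_1,\dots,g_m$ and hence $h_0,h_1,\dots$ are explicitly constructed), producing a finite sequence of semidefinite programs with values $(\zeta_r)_{r=1}^s$, the smallest of which equals the infimum of \eqref{eq:pop} and therefore of \eqref{eq:hyper.program}. The only delicate point is to confirm that the reduction from \eqref{eq:hyper.program} to the polynomial program is itself algorithmic: this amounts to checking that the representation of $S$ provided by Lemma \ref{lema:semi.hyper} is effective, i.e.\ that the polynomials $g_j$ (the entries of $F(G_j)$ and $H$ in \eqref{eq:rep.gram2}) can be written down from $A,b,c,e,f$ in finite time, which is clear from the explicit construction in the proof of Lemma \ref{lema:semi.hyper} since the sizes of $G_0,G_1,u$ and the degrees $\xi,s$ depend only on $\deg(f)$. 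Chaining this symbolic reduction with the symbolic algorithm of Corollary \ref{coro:equi.pop.sdp} yields the required equivalence.
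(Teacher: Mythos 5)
Your proposal is correct and follows essentially the same route as the paper: invoke Lemma \ref{lema:semi.hyper} to express $\Lambda_+(f,e)$ as a projection of a basic semi-algebraic set, lift to a polynomial optimization problem via Remark \ref{rem:convert}, and then apply Corollary \ref{coro:equi.pop.sdp}. You fill in somewhat more detail than the paper's terse proof (in particular, the lifting step and the check that the reduction is effective), but the key ingredients and the chain of reductions are identical.
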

\begin{proof}
Let $h^\star$ be the infimum value of problem \eqref{eq:hyper.program}.
Thanks to Lemma \ref{lema:semi.hyper}, \eqref{eq:hyper.program} can be written as polynomial optimization problem \eqref{eq:pop} with minimum value $h^\star$ (see Remark \ref{rem:convert}).
Note that the objective and constraint polynomials $h_0,\dots,h_l$ are built from $A,b,c,f,e$.
By Corollary \ref{coro:equi.pop.sdp}, there is a symbolic algorithm with input $h_0,\dots,h_l$ which produces a finite sequence of semidefinite programs such that \eqref{eq:pop} is equivalent to a semidefinite program of the smallest minimum value.
Hence the result follows.
\end{proof}
\begin{remark}
The framework of Corollary \ref{coro:cons.Lax.conjecture} provides a constructive way to obtain an equivalent semidefinite program for the hyperbolic program \eqref{eq:hyper.program} with given $A,b,c,f,e$.
However, it has yet to easily be seen that the result of Corollary \ref{coro:cons.Lax.conjecture} provides a solution to the generalized Lax conjecture.
It is because the equivalence of hyperbolic and semidefinite programs might not directly imply the equality of their feasible sets.
\end{remark}




\paragraph{Acknowledgements.} 
The authors would like to thank Bernard Mourrain, Markus Schweighofer, and Claus Scheiderer for their valuable discussions about this paper.

The first author was supported by the funding from ANITI.
The second author was supported by the Tremplin ERC Stg Grant ANR-18-ERC2-0004-01 (T-COPS project) and by the FMJH Program PGMO (EPICS project) and  EDF, Thales, Orange et Criteo.
This work has benefited from the European Union's Horizon 2020 research and innovation programme under the Marie Sklodowska-Curie Actions, grant agreement 813211 (POEMA) as well as from the AI Interdisciplinary Institute ANITI funding, through the French ``Investing for the Future PIA3'' program under the Grant agreement n$^{\circ}$ANR-19-PI3A-0004.

\bibliographystyle{abbrv}

\end{document}